\documentclass[11pt]{amsart}
\usepackage[babel]{csquotes}
\usepackage{enumitem}
\usepackage{geometry}
\usepackage{graphicx} 
\usepackage{amsmath}
\usepackage{amssymb}
\usepackage{amsthm} 
\usepackage{hyperref}
\usepackage{indentfirst}

\setlength{\topmargin}{.2in}
\setlength{\headheight}{0in}
\setlength{\headsep}{0.2in}
\setlength{\textheight}{8.0in}
\setlength{\textwidth}{460pt}
\setlength{\oddsidemargin}{0in}
\setlength{\evensidemargin}{0in}

\numberwithin{equation}{section}

\newtheorem{mainthm}{Theorem}
\newtheorem*{theorem*}{Theorem}
\newtheorem{theorem}{Theorem}[section]
\newtheorem{lemma}[theorem]{Lemma}
\newtheorem{corollary}[theorem]{Corollary}
\newtheorem{prop}[theorem]{Proposition}

\theoremstyle{definition}
\newtheorem{defn}[theorem]{Definition}

\theoremstyle{remark}
\newtheorem{rem}[theorem]{Remark}

\newtheorem*{ques}{Question}

\title{Surface Quotients of Right-Angled Hyperbolic Buildings}

\author{Donghae Lee}
\address{Donghae ~Lee. Department of Mathematical Sciences, Seoul National University, {\it eastsea6574@snu.ac.kr}}

\begin{document}
\maketitle

\begin{abstract}
In this paper, we develop the theory of surface quotients of Fuchsian buildings, a hyperbolic building in which each chamber is a regular $p$-gon. Our focus is on the right-angled Fuchsian building $I_{p, \mathbf{q}}$, defined by a $p$-tuple $\mathbf{q}$ of integers, which indicates the thickness of each side of the chamber. We proved some sufficient conditions for the existence of a surface quotient for a given compact hyperbolic surface tessellated by right-angled regular $p$-gons. By combining these results with tessellation methods of hyperbolic surfaces, We find some conditions on $\mathbf{q}$ under which a surface quotient of $I_{p, \mathbf{q}}$ exists. We also define certain symmetry conditions of $\mathbf{q}$ and prove that these symmetries are necessary for the existence of a surface quotient of $I_{p, \mathbf{q}}$ when the number of $p$-gons to tessellate the surface quotient is not divisible by 4.
\end{abstract}

\section{Introduction}\label{sec1}
A 2-dimensional hyperbolic building is a negatively curved 2-dimensional cell complex constructed from hyperbolic right-angled polygons \cite{futer2012surface}. In this paper, we will discuss the right-angled Fuchsian building $I_{p, \mathbf{q}}$: it is the unique simply-connected 2-dimensional complex such that
\begin{itemize}
    \item every 2-cell is a right angled regular $p$-gon with edges indexed by $i=1, \cdots, p$.
    \item the link of a vertex at the intersection of type $i$ edge and type $i+1$ edge is the complete bipartite graph $K_{q_i, q_{i+1}}$ for given $\mathbf{q}=\{q_1, q_2, \cdots, q_p \}$. (Here, $q_{p+1}:=q_{1}$.)
\end{itemize}
See Definition \ref{def_Ipq} for the precise definition.

Fuchsian buildings are spaces with many interesting properties, and their automorphism groups exhibit a rich variety of symmetries. Among these buildings, a specific type known as the very special right-angled Fuchsian building—which has a constant branching number $v$ along every edge of its polygons and is called Bourdon's building—has been the focus of extensive research. Bourdon's building $I_{p, v}$, which can be regarded as a right-angled Fuchsian building $I_{p, {v, v, \dots, v}}$, represents a negatively curved space with numerous intriguing characteristics. There has been prior interest in the lattices of Bourdon's buildings even before the work of Futer and Thomas, like Bourdon-Pajot \cite{bourdon1999poincare}, Haglund \cite{haglund2002existence}, \cite{haglund2006commensurability}. 

Consider a discrete subgroup $\Gamma$ of $G$ (with respect to the the compact-open topology).
There is significant interest in lattices (i.e. discrete subgroups of finite covolume) of the automorphism group $G$ of a hyperbolic building, which has been studied by Thomas \cite{thomas508385uniform}, Abramenko-Peter-Brown \cite{abramenko2009automorphisms}, Caprace-Emmanuel-Monod \cite{caprace2009isometry}, Carbone-Kangaslampi-Vdovina \cite{carbone2012groups}, Ciobotaru \cite{ciobotaru2014flat}, Kangaslampi-Vdovina \cite {kangaslampi2017hyperbolic}, Smith\cite{smith2018rigidity}, Norledge-Thomas-Vdovina \cite{norledge2018maximal}, Savela \cite{savela2020finding}, etc. 

When the quotient space $I_{p, \mathbf q}/\Gamma$ is a compact surface without boundary, such a lattice $\Gamma$ is referred to as a \textit{surface quotient lattice} and will be denoted by $\Gamma_{p,\mathbf{q}}$. Each cell of the quotient space $I_{p, \mathbf q}$ by $\Gamma$ has a finite stabilizer if $\Gamma$ acts cocompactly. (See Section \ref{subsec2.2} below.) If the genus of the quotient space is $g$, the lattice $\Gamma$ is called a genus $g$ surface quotient lattice. The study of surface quotient lattices of hyperbolic buildings began with Futer and Thomas \cite{futer2012surface} and has since been further developed by Constantine-Lafont-Oppenheim \cite{constantine2014surface}, Kangaslampi \cite{kangaslampi2014surface}, and Kangaslampi-Vdovina \cite{kangaslampi2017hyperbolic}. 

Our main theorem extends the results of Futer and Thomas \cite{futer2012surface}, which concern some sufficient conditions for the existence of genus $g$ surface quotient lattices for Bourdon's buildings $I_{p, v}$. The following theorem, which is the main result in \cite{futer2012surface}, describes the condition on $p$, $v$, and $g$ for the existence of a genus $g$ lattice $\Gamma_{p, v, g}$ of $Aut(I_{p, v})$.

\begin{theorem*}[{Futer-Thomas\cite{futer2012surface}}]\label{thm_thomas}
    Let $p\geq5$, $v\geq2$, and $g\geq2$ be integers, and let $I_{p, v}$ be Bourdon's building with a constant branching number $v$. Assume that $F=\displaystyle\frac{8(g-1)}{p-4}$ is a positive integer.
    \begin{enumerate}
        \item Existence of $\Gamma_{p, v, g}$.
        \begin{enumerate}
            \item If $v\geq2$ is even, then for all $F$, a lattice $\Gamma_{p, v, g}$ exists.
            \item\label{refine} If $F$ is divisible by 4, then for all integers $v\geq 2$, a lattice $\Gamma_{p, v, g}$ exists.
            \item If $F$ is composite, then for infinitely many odd integers $v\geq 3$, a lattice $\Gamma_{p, v, g}$ exists.
        \end{enumerate}
        \item Non-existence of $\Gamma_{p, v, g}$.
        \begin{enumerate}
            \item If $F$ is odd, then for infinitely many odd integers $v\geq3$, a lattice $\Gamma_{p, v, g}$ does not exist.
        \end{enumerate}
    \end{enumerate}
\end{theorem*}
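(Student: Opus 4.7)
The plan is to realize $\Gamma_{p,v,g}$ as the fundamental group of a developable complex of groups on a compact genus $g$ surface $S$ tiled by $F = 8(g-1)/(p-4)$ right-angled regular $p$-gons, whose universal cover is exactly $I_{p,v}$. Bridson-Haefliger's theory of non-positively curved complexes of groups reduces the construction of the lattice to finding a ``typed tessellation'' of $S$ together with a compatible assignment of finite groups along its edges and vertices.

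First I would construct a typed tessellation of $S$: a decomposition into $F$ right-angled $p$-gons with an edge-coloring $\tau\colon E(S) \to \mathbb{Z}/p$ such that each polygon has its edges cyclically labeled $1,\ldots,p$ and adjacent polygons agree on the type of their shared edge. Gauss-Bonnet forces the count $F$; sufficiency is obtained by explicit cut-and-paste of $p$-gons. Given such a tessellation, I would form a simple complex of groups on $S$ by attaching trivial face groups, edge groups $H_e$ of order $v$, and vertex groups $H_v$ of order $v^2$ structured as the direct product of the two adjacent edge groups; this matches the $K_{v,v}$-link of the building. Non-positive curvature of every link is automatic from the bipartite structure, developability then follows, and the development is characterized as $I_{p,v}$. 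The lattice $\Gamma_{p,v,g}$ is the fundamental group of this complex of groups.

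The case-by-case existence statements then follow from finding the right typed tessellation, handled separately. When $v$ is even, the cyclic group $\mathbb{Z}/v$ on each edge acts to glue the $v$ adjacent polygons in two orbits of size $v/2$, so any type-consistent tessellation works for any $F$. When $F \equiv 0 \pmod 4$, a tessellation with 4-fold symmetry can be constructed which removes a parity obstruction on edge types, accommodating any $v \geq 2$. When $F$ is composite, $F=ab$ with $a,b\geq 2$, one starts with a tessellation of a smaller genus surface by $a$ polygons and takes a suitable $b$-fold covering; using arithmetic properties of $v$ then gives infinitely many odd $v$ in an appropriate congruence class for which the construction goes through.

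The main obstacle is the non-existence statement when $F$ is odd. Naively, each polygon contributes one edge of each type $1,\ldots,p$, and every $S$-edge is shared by two polygons, so the number of type-$i$ edges in $S$ equals $F/2$, which is non-integral for odd $F$ and obstructs the naive construction. Allowing polygons to carry reversed cyclic orientation converts this into a cohomological obstruction lying in $H^1(S;\mathbb{Z}/2)$. Combined with the edge-group action of odd order $v$, this obstruction forces a congruence condition on $v$, and a careful count shows this condition fails for infinitely many odd $v \geq 3$. Identifying the precise congruence obstruction and extracting an explicit infinite family of $v$ from it is the technical heart of the non-existence half, and the step I expect to require the most work.
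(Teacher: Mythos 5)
First, note that this theorem is not proved in the paper at all: it is quoted verbatim from Futer--Thomas \cite{futer2012surface} as background, and the paper only records the machinery behind it (Lemma \ref{lem_localconst} and the tessellation constructions of Section \ref{subsec2.2}). So your proposal has to be judged against that machinery, and there it has a genuine error. Your local model --- trivial face groups, edge groups of order $v$, and vertex groups of order $v^2$ equal to the product of the two adjacent edge groups --- does not develop to $I_{p,v}$. In the quotient surface each vertex has four incident edges $e_1,\dots,e_4$ and each edge is shared by only two faces, whereas in the building a vertex link is $K_{v,v}$ and an edge has $v$ incident chambers; the local development therefore has link $K_{v,v}$ only if the \emph{indices} satisfy $|V:E_1|+|V:E_3|=v=|V:E_2|+|V:E_4|$ (equation (2.3) of Lemma \ref{lem_localconst}). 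With your assignment every index $|V:E_j|$ equals $v$, the link is $K_{2v,2v}$, and the development is $I_{p,2v}$. (Your own remark about ``two orbits of size $v/2$'' is the correct picture, but it corresponds to edge groups of order $v/2$ with trivial face groups --- which exists only for even $v$ --- and contradicts the order-$v$ edge groups you wrote down.) This is not a cosmetic slip: the entire difficulty of the theorem lives in the index condition. For odd $v$ one must split $v=a+(v-a)$ with $a\neq v-a$ at every vertex and make these choices globally consistent around every closed geodesic of the tessellation; that consistency problem is exactly what the present paper's ``good coloring'' in Proposition \ref{prop_mainthm} formalizes, and it is what produces the parity-of-$F$ case distinctions in the statement.

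The non-existence half is, as you concede, not yet an argument. The actual obstruction is not a class in $H^1(S;\mathbb{Z}/2)$ attached to reversing orientations of polygons; it comes from counting edges of a fixed type. Each of the $F$ polygons contributes one type-$i$ edge and each surface edge lies on two polygons, so when $F$ is odd some closed geodesic in the tessellation has odd length, and propagating the alternating index condition $a+b=v$ around an odd-length loop forces $2a=v$, which fails for odd $v$ unless the quotient identifies geodesics of different types (this is the mechanism exploited in Lemma \ref{lem_odd length loop} and Proposition \ref{lem: nonexistence main} of the paper, and it is where the ``infinitely many odd $v$'' in Futer--Thomas comes from). Until you replace your local model with the correct index condition and run this parity analysis, neither the existence cases for odd $v$ nor the non-existence case can be completed.
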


In this paper, we provide some sufficient conditions for the existence of a genus $g$ surface quotient lattice $\Gamma_{p, \mathbf{q}, g}$ in $Aut(I_{p, \mathbf{q}})$ for general $\mathbf{q}$. Additionally, we refine case \ref{refine} of Futer and Thomas's result by restricting it to the setting of Bourdon's building. Main Theorem \ref{mainthm_exist} provides sufficient conditions for the existence of $\Gamma_{p,\mathbf{q}, g}$. Before stating the main theorems, let us define some sufficient conditions on $\mathbf{q}$ for the existence of a lattice $\Gamma_{p, \mathbf{q}, g}$.

\begin{defn}[alternating non-coprime sequence]\label{def_altseq}
    A sequence $\mathbf{q}=\{ q_1, \cdots , q_p \}$ is called an alternating non-coprime sequence if it belongs to one of the following two cases:
    \begin{enumerate}
        \item\label{def_altseq_1} If $p$ is even, $q_1 , q_3, \cdots , q_{p-1}$ are all divisible by $\exists d\in \mathbb{N}_{\geq 2}$, and $q_2 , q_4 , \cdots , q_p$ are all divisible by an integer $e>1$. 
        \item\label{def_altseq_2} If $p$ is odd, $q_{i+1} , q_{i+3}, \cdots , q_{i+p-2}$ are all divisible by $\exists d\in \mathbb{N}_{\geq 2}$, and $q_{i+2} , q_{i+4}, \cdots , q_{i+p-1}$ are all divisible by an integer $e>1$ for some integer $i$ between 1 and $p$. Here, indices are $\mathrm{mod}\; p$.
    \end{enumerate}
\end{defn}

\begin{defn}[2-symmetric or 4-symmetric sequence]\label{def_symmseq}
    Let $p$ be an even integer and $\mathbf{q}=\{q_1, q_2, \cdots, q_p\}$ be an integer sequence with length $p$. If the sequence $\mathbf{q}=\{q_{1}, q_{2}, \cdots , q_{p}\}$ satisfies $q_{m+i}=q_{m-i}$ for some integer $m$ and all $i$, then $\mathbf{q}$ is called \emph{2-symmetric} \emph{(about $m$)}. If $p$ is a multiple of 4 and an appropriate positive integer $m$ exists such that $q_{m+i}=q_{m+{p/2}-i}=q_{m+{p/2}+i}=q_{m-i}$ is satisfied, ${q_{i}}$ is called a \emph{4-symmetric} \emph{(about $m$)}. (Here, the indices of $q_i$'s are up to $\mathrm{mod}\;p$.)
\end{defn}

Our first main theorem provides a sufficient condition for the existence of $\Gamma_{p, \mathbf{q}, g}$ in terms of alternating non-coprime sequences and symmetric sequences.

\begin{mainthm}[sufficient conditions for existence of $\Gamma_{p, \mathbf{q}, g}$]\label{mainthm_exist}
    Let $I_{p, \mathbf{q}}$ be a right-angled Fuchsian building determined by a $p$-tuple of integers $\mathbf{q}=\{q_1, q_2, \cdots , q_p\}$, where $p$ is an even integer. Denote by $\Gamma_{p, \mathbf{q}, g}$ a genus $g$ surface quotient lattice of $Aut(I_{p, \mathbf{q}})$, if it exists, for some integer $g$ such that $F=\frac{8(g-1)}{p-4}$ is an integer. Then, the following statements hold:
    \begin{enumerate}
        \item\label{mainthm_exist_1} If $F$ is divisible by 4, for all alternating non-coprime $\mathbf{q}$, a lattice $\Gamma_{p, \mathbf{q}, g}$ exists.
        \item\label{mainthm_exist_2} If $F$ is even and not divisible by 4, then for all alternating non-coprime and 2-symmetric $\mathbf{q}$, a lattice $\Gamma_{p, \mathbf{q}, g}$ exists.
        \item\label{mainthm_exist_3} If $F$ is odd composite number, then for all alternating non-coprime and 4-symmetric alternating non-coprime $\mathbf{q}$, a lattice $\Gamma_{p, \mathbf{q}, g}$ exists if two alternating gcd $d$ and $e$ of $\mathbf{q}$ are both even.
    \end{enumerate}
\end{mainthm}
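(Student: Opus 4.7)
The plan is to work with the combinatorial correspondence (developed earlier in the paper) between genus-$g$ surface quotients of $I_{p,\mathbf{q}}$ and tessellations of $S_g$ by $F$ right-angled $p$-gons carrying a cyclic edge-type labeling together with a branching assignment realizing the bipartite link $K_{q_i,q_{i+1}}$ at each vertex where type-$i$ and type-$(i+1)$ edges meet. Once this correspondence is in hand, the problem reduces to constructing such a labeled tessellation with compatible local link data. I fix any tessellation of $S_g$ by $F$ right-angled $p$-gons; since the interior angles are right, exactly four polygons meet at each vertex, so $V=Fp/4$, $E=Fp/2$, and Euler's formula is consistent with $F=8(g-1)/(p-4)$. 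A fundamental-domain construction then produces an edge-type labeling in which all four corners at any vertex share the same type pair $\{i,i+1\}$.

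For part (\ref{mainthm_exist_1}), assuming $4\mid F$, I use $d\mid q_i$ for odd $i$ and $e\mid q_j$ for even $j$ to first install a Futer--Thomas style $4$-to-$1$ branching around each vertex that realizes the "skeleton" link $K_{d,e}$, after grouping the polygons into $F/4$ quadruples. I then thicken each type-$i$ edge by the cofactor $q_i/d$ or $q_i/e$; the alternating non-coprime hypothesis guarantees these cofactors are integers, and the thickening preserves the compatibility at vertices, promoting each link from $K_{d,e}$ to the required $K_{q_i,q_{i+1}}$.

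For part (\ref{mainthm_exist_2}), with $F\equiv 2\pmod 4$, the quadruple partition is obstructed by parity. The $2$-symmetry $q_{m+i}=q_{m-i}$ supplies a reflection $\sigma$ of the cyclic edge labeling, and I build a tessellation of $S_g$ admitting a combinatorial involution inducing $\sigma$ on types, fixing two polygons and acting freely on the remaining $F-2$; combining the $(F-2)/2$ free orbits with the two fixed polygons yields a "half-quadruple" branching scheme whose symmetric closure supplies the missing link data. Part (\ref{mainthm_exist_3}), with $F$ odd composite, is handled in the same spirit: the $4$-symmetry contributes two commuting reflections of the edge labeling, compositeness $F=F_1F_2$ permits a branched $F_1$-to-$1$ cover of a smaller tiled surface with $F_2$ polygons, and the assumed evenness of $d$ and $e$ supplies the extra halving needed precisely because $F$ carries no factor of $2$.

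The principal obstacle is the local-to-global step: whereas the Bourdon case of Futer--Thomas has a single uniform branching datum $v$ that can be distributed uniformly, here each vertex demands the full $K_{q_i,q_{i+1}}$ link, so compatibility across the tessellation is genuinely non-trivial. The cases $F\equiv 2\pmod 4$ and $F$ odd are exactly where the naive quadruple branching fails, and the $2$- and $4$-symmetry hypotheses on $\mathbf{q}$ are what restore the ability to close up the link data globally; verifying that they suffice, especially that the reflection-symmetric gluings do not conflict around cycles of the $1$-skeleton, is the technical heart of the argument.
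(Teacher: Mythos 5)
Your framework---a complex of groups over a tessellation of $S_g$ with local link conditions generalizing Futer--Thomas from $K_{v,v}$ to $K_{q_i,q_{i+1}}$, and a ``skeleton $K_{d,e}$ thickened by the cofactors $q_i/d$, $q_j/e$'' group assignment---matches the paper's Proposition \ref{prop_genFT} and the direct-product edge groups used in Proposition \ref{prop_mainthm}. But the step you defer as ``the technical heart,'' namely the global consistency of the branching data around cycles of the $1$-skeleton, is precisely the content of the paper's proof, and your proposal contains no argument for it. The paper resolves it by propagating a $\{0,1\}$-coloring of edges along paths and showing the induced holonomy $\Phi_l$ is the identity for every loop $l$; this is reduced, via the homological results of Section \ref{sec3} (boundary geodesic loops generate $H_1$ when their pairwise intersections are at most one point) together with Lemma \ref{lem_difference of homotopic loop}, to the single combinatorial condition that every boundary geodesic loop of the chosen tessellation has \emph{even length}. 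You never identify this condition, yet it is exactly what separates the three cases: it holds for the standard tessellation when $4\mid F$ and fails otherwise, which is why the extra hypotheses appear in parts (\ref{mainthm_exist_2}) and (\ref{mainthm_exist_3}).

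For those two parts your mechanisms also diverge from the paper's and are not workable as stated. The paper does not use an involution with two fixed polygons, nor a branched $F_1$-to-$1$ cover; it subdivides each $p$-gon into $2$ (resp.\ $4$) congruent pieces along symmetry axes so that the odd-length boundary geodesic loops of the Futer--Thomas tessellation become even, applies Proposition \ref{prop_mainthm} to the resulting $p'$-gon tessellation with a modified sequence $\mathbf{q}'$ whose new entries equal $2$, and then uses the $2$- or $4$-symmetry of $\mathbf{q}$ to identify $I_{p',\mathbf{q}'}$ with $I_{p,\mathbf{q}}$ (the new edges are unbranched, so the pieces reassemble into the original chambers). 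This is where the symmetry hypotheses and the evenness of $d$ and $e$ actually enter---the latter is needed so that $\mathbf{q}'$, which contains the entry $2$, is still alternating non-coprime---whereas your explanations (``symmetric closure supplies the missing link data,'' ``supplies the extra halving'') do not connect to any verifiable step. You would further need to justify the existence of the claimed involution on a tessellation with $F\equiv 2\pmod 4$ tiles and to explain how a branched cover interacts with the link condition at its branch points; neither is addressed, so as written the proposal is a plan rather than a proof.
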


Note that $F$ is the number of faces of the quotient space. (See Lemma \ref{lem_facenum}.) The following corollaries from Theorem \ref{mainthm_exist} are immediate.

\begin{corollary}[Every Fuchsian building generated by an even-length alternating non-coprime sequence covers a surface]\label{mainthm_cor1}
    For any even integer $p \geq 6$ and any alternating non-coprime sequence $\mathbf{q}=\{q_1, q_2, \cdots , q_p\}$, the automorphism group $\emph{Aut}(I_{p, \mathbf{q}})$ admits a lattice whose quotient is a compact, orientable hyperbolic surface.
\end{corollary}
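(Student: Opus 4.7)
The plan is to deduce this corollary directly from part \ref{mainthm_exist_1} of Main Theorem \ref{mainthm_exist}. Since the corollary places no constraint on the genus, the only task is to exhibit, for each even $p \geq 6$, some integer $g \geq 2$ such that $F = 8(g-1)/(p-4)$ is a positive integer divisible by $4$; once such a $g$ is produced, the main theorem does all the work.

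Concretely, I would set $g := 1 + (p-4)/2$, which is a well-defined integer because $p$ is even and satisfies $g \geq 2$ since $p \geq 6$. A one-line computation gives $F = 4$, so $F$ is a positive multiple of $4$. More generally, $g := 1 + k(p-4)/2$ yields $F = 4k$ for every positive integer $k$, so there are in fact infinitely many valid choices of genus for any given even $p \geq 6$.

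With this $g$ in hand, the alternating non-coprime hypothesis on $\mathbf{q}$ assumed in the corollary matches the hypothesis of part \ref{mainthm_exist_1} of Main Theorem \ref{mainthm_exist}, so that theorem provides a lattice $\Gamma_{p, \mathbf{q}, g} \leq Aut(I_{p, \mathbf{q}})$ whose quotient is a compact surface of genus $g \geq 2$. Because $g \geq 2$ and the building carries a piecewise-hyperbolic $\mathrm{CAT}(-1)$ structure, the quotient inherits a hyperbolic metric, and orientability is built into the convention that ``genus'' refers to an orientable closed surface.

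I do not expect any real obstacle. The entire content of the corollary reduces to the arithmetic observation that for even $p \geq 6$ the integer $p-4$ is even, so the divisibility condition ``$4 \mid F$'' in part \ref{mainthm_exist_1} of Main Theorem \ref{mainthm_exist} can always be arranged by an appropriate choice of $g$. All of the genuinely non-trivial work is hidden inside the main theorem; the corollary is just a uniform existence statement pointing out that no assumption on $\mathbf{q}$ beyond ``alternating non-coprime'' is needed once the genus is left free.
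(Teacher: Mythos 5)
Your proof is correct and matches the paper's intent: the paper simply declares this corollary immediate from Main Theorem \ref{mainthm_exist}, and the only content needed is exactly your observation that for even $p\geq 6$ one can choose $g=1+(p-4)/2\geq 2$ to get $F=4$, so part \ref{mainthm_exist_1} applies to any alternating non-coprime $\mathbf{q}$.
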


\begin{corollary}[Every genus space is covered by a Fuchsian building]\label{mainthm_cor2}
    For any $g \geq 2$ and even $p \geq 6$ such that $\frac{8(g-1)}{p-4}$ is an integer, there exists a compact, orientable hyperbolic surface of genus $g$ that is the quotient of a Fuchsian building with $p$-gons as chambers.
\end{corollary}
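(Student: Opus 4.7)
The plan is to identify, for each admissible pair $(p, g)$, a concrete sequence $\mathbf{q}$ to which Main Theorem \ref{mainthm_exist} applies, and to fall back on the Futer--Thomas theorem for the residual cases. The natural candidate is the constant $p$-tuple $\mathbf{q}_0 = (2, 2, \dots, 2)$: it is alternating non-coprime with alternating gcds $d = e = 2$, it is $2$-symmetric about every index, and---whenever $4 \mid p$---it is also $4$-symmetric about every index.

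First I would carry out the elementary divisibility analysis of $F = 8(g-1)/(p-4)$. If $p \equiv 2 \pmod 4$, write $p - 4 = 2\ell$ with $\ell$ odd; integrality of $F$ forces $\ell \mid (g-1)$, so $F = 4(g-1)/\ell$ is automatically divisible by $4$, and Main Theorem \ref{mainthm_exist}(\ref{mainthm_exist_1}) applied to $\mathbf{q}_0$ settles this case. If instead $4 \mid p$, I split into subcases according to $F \pmod 4$: part (\ref{mainthm_exist_1}) handles $F \equiv 0 \pmod 4$, part (\ref{mainthm_exist_2}) handles $F \equiv 2 \pmod 4$ using that $\mathbf{q}_0$ is $2$-symmetric, and part (\ref{mainthm_exist_3}) handles $F$ odd composite using that $\mathbf{q}_0$ is $4$-symmetric with both alternating gcds equal to the even number $2$.

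The residual cases---$F = 1$ and $F$ an odd prime---lie outside the direct reach of Main Theorem \ref{mainthm_exist} and form the main obstacle. To overcome it, I would invoke the Futer--Thomas theorem recalled in the introduction: part (1)(a) supplies a genus $g$ surface quotient lattice of $Aut(I_{p, v})$ for every even $v \geq 2$ and every integer value of $F$. Applying this with $v = 2$, and observing that Bourdon's building $I_{p, 2}$ is precisely the right-angled Fuchsian building $I_{p, \mathbf{q}_0}$ associated with the constant sequence, produces a lattice whose quotient is the required genus $g$ compact orientable hyperbolic surface tessellated by regular right-angled $p$-gons. Combining the three cases above with this residual argument exhausts all admissible $(p, g)$.
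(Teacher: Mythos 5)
Your argument is correct in substance and, for the bulk of the cases, follows the route the paper intends: the paper declares this corollary ``immediate'' from Theorem~\ref{mainthm_exist}, which amounts to choosing a constant sequence and matching the residue of $F$ modulo $4$ to the appropriate part of that theorem, exactly as you do (your observation that $p\equiv 2\pmod 4$ forces $4\mid F$ is a clean way to organize the case split). Where you genuinely add something is the residual case $F=1$ or $F$ an odd prime, which does occur for even $p$ (e.g.\ $p=12$, $g=2$ gives $F=1$, and $p=12$, $g=4$ gives $F=3$): Theorem~\ref{mainthm_exist} is silent there, since its third part only covers odd \emph{composite} $F$, so the corollary is not in fact immediate from that theorem alone, and your fallback to part (1)(a) of the Futer--Thomas theorem quoted in the introduction (even $v$, arbitrary $F$) is exactly the right patch. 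One small repair is needed: Definition~\ref{def_Ipq} requires $q_i\geq 3$, so the constant tuple $(2,2,\dots,2)$ is not, strictly speaking, an admissible $\mathbf{q}$ for the paper's $I_{p,\mathbf{q}}$ (and $I_{p,2}$ degenerates to the thin building $\mathbb{H}^2$, which arguably does not count as ``a Fuchsian building'' in the sense of the statement). Replacing it throughout by $(4,4,\dots,4)$ --- still alternating non-coprime with even alternating gcds $d=e=2$, still $2$-symmetric about every index, and $4$-symmetric whenever $4\mid p$ --- and invoking Futer--Thomas with $v=4$ in the residual case fixes this without changing anything else in your argument.
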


Also, we obtain an improved version of statement \ref{refine} from Futer-Thomas's existence theorem by restricting to the case $\mathbf{q} = \{v, v, \cdots, v\}$. This result follows directly from the fact that a tuple consisting solely of the integer $v \geq 2$, $\{v, v, \cdots, v\}$, is always an alternating non-coprime, 2-symmetric, and 4-symmetric sequence when $p$ is divisible by 4.

\begin{corollary}[Refinement of \ref{refine} from the main theorem in \cite{futer2012surface}]
    Let $p \geq 5$, $v \geq 2$, and $g \geq 2$ be integers such that $F = \displaystyle\frac{8(g-1)}{p-4}$ is a positive integer. If $F$ is even, then a lattice $\Gamma_{p, v, g}$ exists for all integers $v \geq 2$.
\end{corollary}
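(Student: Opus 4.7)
The plan is to reduce the corollary to Main Theorem~\ref{mainthm_exist} by applying that theorem to the constant $p$-tuple $\mathbf{q}_{0}=\{v,v,\ldots,v\}$ when $p$ is even, and to invoke clause~(\ref{refine}) of the Futer--Thomas theorem when $p$ is odd. The whole argument is essentially bookkeeping; the substantive work has already been done in Main Theorem~\ref{mainthm_exist}.

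First I would verify that $\mathbf{q}_0$ trivially satisfies every hypothesis appearing in Main Theorem~\ref{mainthm_exist}. It is alternating non-coprime with alternating gcds $d=e=v\geq 2$; it is 2-symmetric about any center $m$ because all entries coincide; and, whenever $4\mid p$, it is 4-symmetric about any center for the same reason. When $v$ itself is even, both alternating gcds are even, which is the additional hypothesis required by part~(\ref{mainthm_exist_3}). These checks are tautological.

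For $p$ even I would split on $F\bmod 4$. If $F\equiv 0\pmod 4$, part~(\ref{mainthm_exist_1}) of Main Theorem~\ref{mainthm_exist} immediately produces a lattice $\Gamma_{p,v,g}$ for every $v\geq 2$. If $F\equiv 2\pmod 4$, part~(\ref{mainthm_exist_2}) applies because $\mathbf{q}_0$ is both alternating non-coprime and 2-symmetric. Either way the corollary follows for even $p$.

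For odd $p$ I would make a short arithmetic observation rather than use the new theorem: if $p$ is odd then $p-4$ is odd and coprime to $8$, so integrality of $F=8(g-1)/(p-4)$ forces $(p-4)\mid(g-1)$, and hence $F$ is automatically a multiple of $8$. In particular ``$F$ even'' coincides with ``$F$ divisible by $4$'' when $p$ is odd, so clause~(\ref{refine}) of the Futer--Thomas theorem supplies $\Gamma_{p,v,g}$. The only subtle point, and the closest thing to an obstacle, is noticing that the parity hypothesis strengthens automatically in the odd-$p$ case; this is what makes the refinement genuinely new only when $p$ is even.
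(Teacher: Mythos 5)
Your proof is correct and follows essentially the same route as the paper: both reduce the statement to Main Theorem~\ref{mainthm_exist} by observing that the constant tuple $\{v,v,\ldots,v\}$ is alternating non-coprime (with $d=e=v$) and 2-symmetric, then splitting on $F \bmod 4$ to invoke parts~(\ref{mainthm_exist_1}) and~(\ref{mainthm_exist_2}). Your explicit treatment of odd $p$ --- noting that $p-4$ is then coprime to $8$, so $8 \mid F$ and clause~(\ref{refine}) of Futer--Thomas already applies --- is a worthwhile addition, since Main Theorem~\ref{mainthm_exist} is stated only for even $p$ and the paper's one-line justification leaves that case implicit.
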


Theorem \ref{mainthm_nonexist} provides some necessary conditions on $p$, $\mathbf{q}$, and $g$ for existence of $\Gamma_{p, \mathbf{q}, g}$.

\begin{mainthm}[necessary condition for existence of $\Gamma_{p, \mathbf{q}, g}$]\label{mainthm_nonexist}
Let $I_{p, \mathbf{q}}$ be a right-angled Fuchsian building determined by a $p$-tuple of integers $\mathbf{q}=\{q_1, q_2, \cdots , q_p\}$. Denote by $\Gamma_{p, \mathbf{q}, g}$ a genus $g$ surface quotient lattice of $\emph{Aut}(I_{p, \mathbf{q}})$, if it exists, for some integer $g$ such that $F=\frac{8(g-1)}{p-4}$ is an integer. Then, the following statements hold:
    \begin{enumerate}
        \item If $F$ is even and not divisible by 4, then $\mathbf{q}$ must be 2-symmetric for a lattice $\Gamma_{p, \mathbf{q}, g}$ to exist.
        \item If $F$ is odd, then $\mathbf{q}$ must be 4-symmetric for a lattice $\Gamma_{p, \mathbf{q}, g}$ to exist.
    \end{enumerate}
\end{mainthm}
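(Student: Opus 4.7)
My plan is to derive the symmetry conditions from the combinatorics of the tessellation of the quotient surface $S=I_{p,\mathbf{q}}/\Gamma$ by its $F$ right-angled $p$-gon chambers. Each chamber inherits a cyclic edge labeling $1,\dots,p$ from the building, well-defined up to the subgroup $D_{\mathbf{q}}\subseteq D_p$ of dihedral symmetries of the labeled $p$-gon that preserve $\mathbf{q}$. It is immediate from Definition \ref{def_symmseq} that $\mathbf{q}$ is $2$-symmetric iff $D_{\mathbf{q}}$ contains a reflection, and $4$-symmetric iff $D_{\mathbf{q}}$ contains both a reflection and the rotation of order $2$, namely $\rho^{p/2}$. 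Consequently, every edge of $S$ has a type which is a $D_{\mathbf{q}}$-orbit on $\{1,\dots,p\}$, and every vertex of $S$ has a type which is a $D_{\mathbf{q}}$-orbit on the set of $p$ vertex positions.

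The main identity is a double count by orbit. Each chamber contributes one side in each edge position and one corner at each vertex position, while in $S$ each edge is shared by two chambers and each vertex by four. Grouping by $D_{\mathbf{q}}$-orbits yields
\begin{equation*}
2\,\#\{\text{edges of }S\text{ of type }O_e\}=F\cdot|O_e|,\qquad 4\,\#\{\text{vertices of }S\text{ of type }O_v\}=F\cdot|O_v|,
\end{equation*}
for every edge-orbit $O_e$ and vertex-orbit $O_v$, so $2\mid F|O_e|$ and $4\mid F|O_v|$. In particular, $F$ odd forces $4\mid|O_v|$ on every vertex-orbit, while $F\equiv 2\pmod 4$ forces $2\mid|O_v|$.

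The final step is to translate these orbit-size constraints into the required symmetry of $D_{\mathbf{q}}$. Here I combine the divisibility above with the vertex-monodromy condition: at each vertex of $S$ four chambers meet, and the four edge-transitions between them, viewed as elements of $D_p$, must compose consistently with the fact that the four corners are all of the same $D_{\mathbf{q}}$-orbit and that adjacent edges at a corner have consecutive types $(j,j+1)$ inside each chamber. A purely rotational $D_{\mathbf{q}}$ forces the four transitions around a vertex to alternate in a way that cannot close up without a reflection being present, which rules out purely rotational stabilizers as soon as $F\not\equiv 0\pmod 4$; this gives $2$-symmetry. The odd-$F$ case additionally forces vertex-orbits to have size divisible by $4$, which together with the reflection in $D_{\mathbf{q}}$ produces the extra $\rho^{p/2}$ rotation and hence $4$-symmetry. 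The contrapositives are the two assertions of the theorem.

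The main obstacle is this last vertex-monodromy analysis: one must pin down precisely how the four transitions compose around a vertex, incorporating the fact that at a corner the two incident edges have consecutive labels, and then extract the parity obstruction that distinguishes $F\equiv 0\pmod 4$ from $F\equiv 2\pmod 4$ and from $F$ odd. I expect this to reduce to a careful case analysis on the action of $D_p$ on edge-position pairs and on the total parity of reflection-type transitions in $S$, and to be the most technical part of the proof.
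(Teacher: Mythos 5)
Your counting framework is sound as far as it goes: the identities $2\,\#\{\text{edges of type }O_e\}=F|O_e|$ and $4\,\#\{\text{vertices of type }O_v\}=F|O_v|$ are correct (with the caveat that the orbits should be taken under the image $H$ of $\Gamma_{p,\mathbf q,g}$ in the label group, not under all of $D_{\mathbf q}$), and they are close in spirit to the count the paper uses to produce an odd-length geodesic loop. But the proposal has two genuine gaps. First, the translation of symmetry into group theory is wrong as stated: $\mathbf q$ being $2$-symmetric in the sense of Definition \ref{def_symmseq} requires $q_{m+i}=q_{m-i}$ for an \emph{integer} $m$, i.e.\ a reflection of the label $p$-cycle that \emph{fixes an edge label}. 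A reflection through two vertices of the label cycle gives $q_{m+i}=q_{m+1-i}$, which is not $2$-symmetry: for $p=6$ the sequence $\mathbf q=(1,1,2,3,3,2)$ admits such a reflection but is $2$-symmetric about no integer $m$. So ``$D_{\mathbf q}$ contains a reflection'' does not imply the conclusion of the theorem; you must produce an edge-fixing reflection, and nothing in your argument distinguishes the two kinds.

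Second, and more seriously, the divisibility constraints extracted from the double count cannot by themselves rule out purely rotational label groups, so the entire burden falls on the ``vertex-monodromy'' step, which you only assert. For instance $H=\langle\rho^{p/2}\rangle$ has all orbits of size $2$ (compatible with $F\equiv 2\pmod 4$) and $H=\langle\rho^{p/4}\rangle$ has all orbits of size $4$ (compatible with $F$ odd), yet neither contains a reflection; a sequence such as $(1,2,3,1,2,3)$ is invariant under $\rho^{p/2}$ without being $2$-symmetric. You acknowledge that closing this is ``the most technical part,'' but it is precisely the content of the theorem. The paper supplies the missing mechanism geometrically: the count shows that if no labels are identified then each wall type contributes $F/2$ edges, which fails to be an even integer when $4\nmid F$, so some closed geodesic loop of a single type $i$ has odd length; Lemma \ref{lem_odd length loop} then exhibits an element of $\Gamma_{p,\mathbf q,g}$ translating that geodesic by an odd number of edges, which necessarily swaps the transverse types $i-1$ and $i+1$ while fixing type $i$ --- exactly the edge-fixing reflection needed --- and Lemmas \ref{lem_orbit in I_p,q} and \ref{lem_compare thickness} convert this into $q_{i+k}=q_{i-k}$. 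The odd-$F$ case further requires showing that the set of reflection axes so obtained has even cardinality (the paper does this by a parity count of the total length $nF$ of the corresponding loops), which is also not addressed by your remark that the reflection ``produces the extra $\rho^{p/2}$.'' Without a worked-out substitute for these steps, the proposal does not establish either assertion.
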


Note that Main Theorem \ref{mainthm_exist} and Main Theorem \ref{mainthm_nonexist} are not equivalent. A natural question that arises is whether conditions in Main Theorem \ref{mainthm_exist} are also necessary conditions for the existence of $\Gamma_{p, \mathbf{q}, g}$ or not. More briefly, the following question can be proposed:
\begin{ques}
    Is an alternating non-coprime condition of $\mathbf{q}$ necessary for the existence of $\Gamma_{p, \mathbf{q}, g}$ when $p$ is even?
\end{ques}

The results in this paper provide a new perspective on the group action on hyperbolic buildings. In particularly, if there is a symmetry in the sequence of thickness $\mathbf{q}$, referred to as the \emph{Alternating non-coprime condition}(See Definition \ref{def_altseq}), the surface quotient lattice $\Gamma_{p, \mathbf{q}, g}$ exists. Additionally, our proof differs from the previous work of Futer-Thomas \cite{futer2012surface} in that it employs an inductive method, decomposing elements of the homology group into smaller loops, referred to as \emph{polygonal loops} and \emph{boundary geodesic loops} (See Definition \ref{def_bbl} and Definition \ref{def_bgl}).
\\

This article is organized as follows: In Section \ref{sec2}, we recall preliminary facts on complexes of groups. Section \ref{subsec2.2} covers the method of Futer and Thomas in \cite{futer2012surface} for tessellating hyperbolic surfaces with right-angled regular $p$-gons. In Section \ref{sec3}, we prove a homological property about the closed geodesic loops in the 1-skeleton of tessellation of a compact hyperbolic surface, which will be used in Section \ref{sec4}. In Section \ref{sec4}, we provide a general theorem (Theorem \ref{prop_mainthm}) on the existence of genus $g$ lattices, applying the tessellation method described in Section \ref{subsec2.2}, from which Main Theorem \ref{mainthm_exist} follows. Finally, in Section \ref{sec5}, we prove necessary conditions on $p$, $\mathbf{q}$, and $g$ for the existence of a lattice $\Gamma_{p, \mathbf{q}, g}$.

\section{Preliminaries}\label{sec2}

In this section, we recall the theory of complexes of groups and the definition of building structure from \cite{abramenko2008buildings} section 4. We also define the right-angled Fuchsian building, which is the main object of this paper, following the definition in \cite{bourdon2000immeubles} Section 1.5.1. Then, in \ref{subsec2.2}, we gather known results about hyperbolic tessellations of closed (i.e. compact without boundary) surfaces.

\subsection{Complexes of Groups}\label{subsec2.0}
Before defining a right-angled Fuchsian building, which is a main concept in our paper, we first briefly summarize the theory of complexes of groups. Complexes of groups can be viewed as a high-dimensional Bass-Serre theory of graph of groups (\cite{serre2002trees}). However, a key difference exists between Bass-Serre theory and complexes of groups theory, known as \textit{developability}.

\begin{defn}[Complexes of Groups, \cite{futer2012surface}]
    A complex of groups $G(X)=(G_{\sigma}, \psi_{a}, g_{a, b})$ over a polygonal complex $X$ is given by:
    \begin{enumerate}
        \item a group $G_{\sigma}$ for each $\sigma\in V(X')$ where $X'$ is a barycentric subdivision of $X$, called the \emph{local group} at $\sigma$;
        \item a monomorphism $\psi_{a} : G_{i(a)}\rightarrow G_{t(a)}$ for each $a \in E(X')$; and
        \item for each pair of composable edges $a, b$ in $X'$, an element $g_{a, b}\in G_{t(a)}$, such that 
        \begin{center}
            $\text{Ad} (g_{a, b})\circ \psi_{ab}=\psi_{a}\circ\psi_{b}$
        \end{center}
        where $\text{Ad} (g_{a, b})$ is conjugation by $g_{a ,b}$ in $G_{t(a)}$.
    \end{enumerate}
\end{defn}
Let $X$ be a locally finite polyhedral complex. Suppose that there is a simply connected polyhedral complex $\Tilde{X}$ and $\Gamma \le \text{Aut}(\Tilde{X})$ such that $X=\Tilde{X}/\Gamma$. Let $G(X)$ denote the complex of groups generated by adding the data about stabilize; for each polyhedral subcomplex, a stabilizer subgroup of $\Gamma$-action on $\Tilde{X}$ is assigned. For any polyhedral complex $Y$ and complexes of groups $G(Y)$, We call $G(Y)$ \textit{developable} if there is a simply connected polygonal complex $\Tilde{X}$ that makes $G(Y)\simeq G(X)$ for some $\Gamma$. If the dimension of $X$ is 1, $X$ can be considered as a graph, and any $G(X)$ is developable. However, for 2 or higher dimensional $X$, $G(X)$ does not need to be developable. It is known that every complex of groups with nonpositive curvature is locally developable. (See \cite{martinez2013coherence} for precise definition.) The important point to note here is we don't need to consider the global developability of $G(X)$ in this paper, because we are only focusing on the hyperbolic case.

However, even when $G(X)$ is not developable, we can define \textit{local development} at each vertex $\sigma$ on $X$. Before defining local development, let us denote by $V(X)$ the vertex set of $X$ and $X'$ the first Barycentric subdivision of $X$. For $\sigma \in V(X)$, \textit{star} $\text{St}(\sigma)$ of $\sigma$ is defined as the union of all simplices in $X'$ that meet with $\sigma$. 
\begin{defn}[Local Development]\label{def_local development}
    Let $\widetilde{\text{St}(\sigma)}$ be a complex of groups with an action of $\Gamma_{\sigma}$ that is free of edge inversions, such that the quotient $\widetilde{\text{St}(\sigma)}/\Gamma_{\sigma}$ is isomorphic to $\text{St}(\sigma)$. We define $\widetilde{\text{St}(\sigma)}$ as the \emph{local development} of $\sigma$ in $X$, and denote it by $\text{St}(\Tilde{\sigma})$.
\end{defn}
Proof for the existence of local development can be found in \cite{ziemianska2015coverings}.

For a given 2-dimensional complex $X$ and vertex $\sigma \in X$, the \textit{link} of $\sigma$, denoted by $Lk(\sigma, X)$, is defined as the graph obtained by intersecting $X$ with a sufficiently small sphere $B_r (\sigma)$ centered at $\sigma$. Link tells us how $X$ looks like near $\sigma$, so it reflects the local characteristics of $X$. For example, in the building structure that we will define below (see Definition \ref{def_building}), the link of any vertex is a connected graph.

From the local developability of complex of groups mentioned above, it follows that it is possible to define the link of the local development for any $\sigma\in X$. The link of the local development will later be considered significantly to determine the given complexes of groups' universal cover, if it exists.

However, a universal cover always exists if the given complex of groups has nonpositive curvature: This is one of the important properties of negatively curved spaces.

\begin{theorem}[Bridson-Haefliger \cite{bridson2013metric}]\label{thm_nonpositive developable}
    A nonpositively curved complexes of groups is developable.
\end{theorem}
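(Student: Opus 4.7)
The plan is to produce a simply connected polyhedral complex $\widetilde{X}$ together with an action of a suitable fundamental group of $G(X)$ whose quotient complex of groups recovers $G(X)$. Since local developments $\mathrm{St}(\widetilde{\sigma})$ exist at every vertex by Definition \ref{def_local development}, the essential task is gluing these local models coherently into a globally simply connected object; the nonpositive curvature hypothesis is what makes this gluing succeed.

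First, I would define the fundamental group $\pi_1(G(X), \sigma_0)$ combinatorially. Paths in the barycentric subdivision $X'$ starting at a basepoint $\sigma_0$, decorated with elements of the target groups along each edge, form a groupoid once one imposes the local group relations and the cocycle relations $\mathrm{Ad}(g_{a,b}) \circ \psi_{ab} = \psi_a \circ \psi_b$; the fundamental group is the vertex group of this groupoid at $\sigma_0$. Next, I would assemble the candidate $\widetilde{X}$ as a colimit of the local developments over this path groupoid: its cells are equivalence classes of pairs (combinatorial path to $\sigma$, lifted cell in $\mathrm{St}(\widetilde{\sigma})$), with the equivalence generated by the relations above. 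By construction, $\pi_1(G(X), \sigma_0)$ acts on $\widetilde{X}$ without edge inversions, the stabilizer of any lift of a cell is the prescribed local group, and the quotient complex of groups is canonically isomorphic to $G(X)$.

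The decisive step is to verify that this $\widetilde{X}$ is simply connected. Locally $\widetilde{X}$ inherits the nonpositive curvature from each local development, so it is a locally CAT($0$) polyhedral complex in the sense of Bridson-Haefliger, which in turn reduces to a link condition (every vertex link is CAT($1$) with systole $\geq 2\pi$). The Cartan-Hadamard globalization theorem for locally CAT($0$) spaces then guarantees that the topological universal cover of $\widetilde{X}$ carries a global CAT($0$) metric, hence is contractible. Because $\widetilde{X}$ was built to incorporate every homotopy class of decorated paths already --- any lift of a based loop either closes up or produces a genuinely nontrivial element of $\pi_1(G(X), \sigma_0)$ which then acts by deck transformations --- the candidate must coincide with its own universal cover and is therefore simply connected.

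The main obstacle is exactly this local-to-global passage. Without the curvature hypothesis, nondevelopable complexes of groups already exist in dimension two, so any proof must use the hypothesis essentially; the Cartan-Hadamard globalization theorem is the correct tool, and I would need to check carefully that the piecewise metric inherited by $\widetilde{X}$ from the local developments actually realises the locally CAT($0$) condition (as opposed to merely a combinatorial condition on links). Once that verification is in place, the remaining checks --- freeness up to edge inversions, that stabilizers match the local groups $G_\sigma$, and that the quotient recovers both the monomorphisms $\psi_a$ and the cocycle data $g_{a,b}$ --- reduce to routine manipulations with the path-groupoid presentation.
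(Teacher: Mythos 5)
The paper offers no proof of this statement---it is quoted verbatim from Bridson--Haefliger---so there is nothing in-paper to compare against; I can only measure your outline against the standard argument it is citing. Your overall strategy (the path-groupoid fundamental group $\pi_1(G(X),\sigma_0)$, the development assembled from equivalence classes of decorated paths, Cartan--Hadamard) is indeed the right skeleton, but there is a genuine gap at the decisive point. The claim that ``by construction \dots the stabilizer of any lift of a cell is the prescribed local group, and the quotient complex of groups is canonically isomorphic to $G(X)$'' is precisely the content of developability and is not automatic from the colimit construction: the identical construction can be carried out for a non-developable complex of groups, and what fails there is exactly this step---some $1\neq g\in G_\sigma$ becomes trivial in $\pi_1(G(X),\sigma_0)$ after conjugation by a path to the basepoint, cells of the local development $\mathrm{St}(\widetilde{\sigma})$ get identified in the colimit, and the stabilizers come out as proper quotients of the $G_\sigma$. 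So the curvature hypothesis must be invoked to prove injectivity of the maps $G_\sigma\to\pi_1(G(X),\sigma_0)$ (equivalently, that each local development embeds in $\widetilde{X}$), whereas your proposal spends it only on simple connectivity. Relatedly, the simple-connectivity argument is circular as written: ``any lift of a based loop either closes up or produces a nontrivial element of $\pi_1(G(X),\sigma_0)$ which then acts by deck transformations'' presupposes that $\widetilde{X}\to X$ already behaves as a covering with deck group $\pi_1(G(X),\sigma_0)$, which is what is being established.

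The mechanism that closes both gaps simultaneously, and which your outline needs to make explicit, is roughly: verify via the link condition that each local development is (locally) CAT($0$); show that the natural map $\mathrm{St}(\widetilde{\sigma})\to\widetilde{X}$ is a local isometry; then apply the Cartan--Hadamard theorem together with the fact that a local isometry from a complete CAT($0$) space into a (locally) CAT($0$), simply connected space is an isometric embedding. This yields at once that $\widetilde{X}$ is CAT($0$) (hence contractible and simply connected) and that the stars inject, which is what forces the stabilizers to be the prescribed local groups. With that step supplied your proposal becomes the standard Bridson--Haefliger proof; without it, the hypothesis of nonpositive curvature is not yet being used where it is actually indispensable.
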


\subsection{Right-angled Fuchsian building}\label{subsec2.1}

Now we will provide the general definition of 2-dimensional building. This definition of building is followed by a specification of this definition for right-angled Fuchsian buildings, the main concept of this paper, using a certain complex of groups and its universal cover.

\begin{defn}[building, \cite{abramenko2008buildings}]\label{def_building}
    A \emph{building} is a simplicial complex $\Delta$ that can be expressed as the union of subcomplexes $\Sigma$ (called \emph{apartments}) satisfying the following axioms:
    \begin{enumerate}
        \item Each apartment $\Sigma$ is a Coxeter complexes.
        \item For any two simplices $A, B \in \Delta$ (called \emph{chambers}), there is an apartment $\Sigma$ containing both of them. 
        \item If $\Sigma$ and $\Sigma'$ are two apartments containing $A$ and $B$, then there is an isomorphism $\Sigma\rightarrow\Sigma'$ fixing $A$ and $B$ pointwise.
    \end{enumerate}
\end{defn}

\begin{rem}
    In the case of a 2-dimensional building $\Delta$, it can be defined as a polygonal complex consisting of polygonal chambers. The barycentric subdivision of $\Delta$ then coincides with the definition of a building as described in Definition \ref{def_building}. Throughout this paper, only 2-dimensional buildings are considered, so the term \emph{building} refers to the definition based on a polygonal complex.
\end{rem}

Definition \ref{def_building} provides a general definition for arbitrary-dimensional building structures. By adding the condition that each chamber in the building $\Delta$ is a 2-dimensional polygon, this definition becomes that of a 2-dimensional building. Buildings can also be categorized by the curvature of their apartments as well as the dimension of the chambers. If each apartment has positive (zero, negative, respectively) curvature, the building is called a spherical (Euclidean, hyperbolic) building. This paper focuses on 2-dimensional hyperbolic buildings, which are of significant interest in the fields of geometric and combinatorial group theory. Specifically, we discuss \emph{right-angled Fuchsian buildings}, studied in \cite{farb2008problems}, \cite{haglund2006commensurability}, \cite{bourdon2000rigidity}, \cite{remy2004kac}, \cite{remy2006topological}, \cite{capdeboscq2012cocompact}, \cite{remy2012buildings}, \cite{lim2006enumeration}, \cite{clais2014right}, \cite{clais2016combinatorial}. A right-angled Fuchsian building is a 2-dimensional hyperbolic building where each chamber is a regular right-angled $p$-gon, which is briefly defined in Section \ref{sec1}. Definition \ref{def_Ipq} is the definition of right-angled Fuchsian buildings from Section 1.5.1 in \cite{bourdon2000immeubles}.

\begin{defn}[right-angled Fuchsian building $I_{p, \mathbf{q}}$, \cite{bourdon2000immeubles}]\label{def_Ipq}
    Let $p$ be an integer with $p\geq 5$, let $P$ be a right-angled regular $p$-gon in hyperbolic plane $\mathbb{H}^2$ with curvature -1, and let $\mathbf{q}=\{ q_1, q_2, \cdots , q_p \}$ be a $p$-tuple of integers so that $q_i \geq 3$ for $i=1, 2, \cdots, r$. Then label clockwise the edges of $P$ by $\{ 1\}, \{2\}, \cdots, \{p\}$ and its vertices by $\{1, 2\}, \{2, 3\}, \cdots, \{p-1, p\}, \{p, 1\}$, where a vertex which is placed on the intersection of an edge $\{i\}$ and an edge $\{i+1\}$ is labeled as $\{i, i+1\}$. 
    Assign the trivial group to the face of $P$, the group $\Gamma_i=\mathbb{Z}/q_i\mathbb{Z}$ to edge $\{i\}$, and the group $\Gamma_{i, i+1}=\Gamma_i\times \Gamma_{i+1}$ to vertex $\{i, i+1\}$. The resulting complex of groups, obtained by assigning groups in this manner, is denoted as $G(P)$. The universal cover of $G(P)$ is called a \emph{right-angled Fuchsian building}, denoted by $I_{p, \mathbf{q}}$.
\end{defn}

\begin{figure}\label{fig: indexing}
    \centering
    \includegraphics[width=0.7\textwidth]{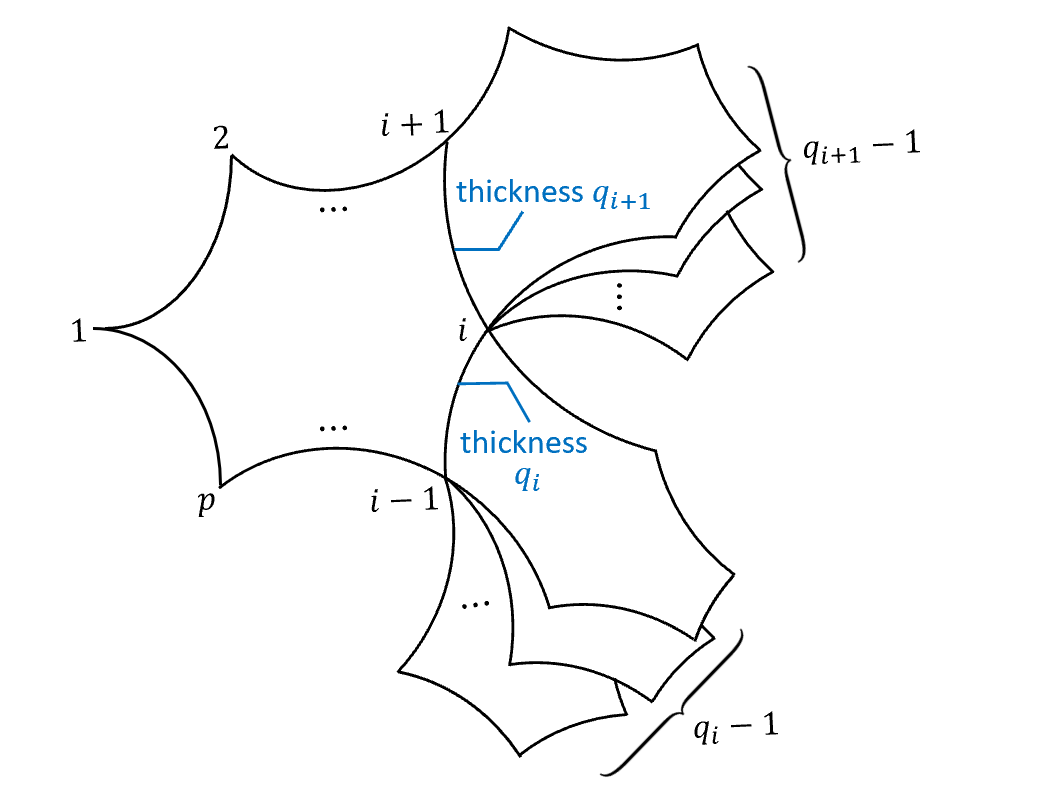}
    \caption{Right-angled Fuchsian building $I_{p, \mathbf{q}}$}
    \label{fig: example of Ipq}
\end{figure}

The developability of this complex of groups $G(P)$ has already been proven from Theorem \ref{thm_nonpositive developable}, thus the existence of $I_{p, \mathbf{q}}$ is guaranteed for every $p$ and $\mathbf{q}$. A right-angled hyperbolic building is clearly a labeled 2-dimensional polygonal complex with the following properties:
\begin{enumerate}
    \item $I_{p, \mathbf{q}}$ is contractible.
    \item Each polygon is a copy of the labeled complex $P$ with labels in edges vertices. (Orientation of the labeling can be reversed.)
    \item\label{properties_link} For a vertex $\sigma$ labeled by $\{i, i+1\}$, the link $Lk(\sigma, X)$ is the complete bipartite graph $K_{q_i, q_{i+1}}$.
\end{enumerate}

Futer-Thomas \cite{futer2012surface} shows the condition of local groups near $\sigma$ for the local development $\text{St}(\Tilde{\sigma})$ to have the link $L$ as a bipartite graph $K_{v,v}$.

\begin{lemma}[{Futer-Thomas \cite{futer2012surface}}]\label{lem_localconst}
    Let $X$ be a compact orientable hyperbolic surface tessellated by right-angled $p$-gons. Suppose that $G(X)$ is a complex of groups over $X$. Let $\sigma$ be a vertex in this tiling and let $G=V_{\sigma}$. Let the adjacent edge groups $E_j$ and face groups $F_k$ be as in Figure \ref{fig_lem local const} for $j, k=1, 2, 3, 4$. Since $G(X)$ is simple, we may identify each $E_j$ and $F_k$ with its image under inclusion into $V$. 

    Then the local development $\text{St}(\Tilde{\sigma})$ has link $L$ the complete bipartite graph $K_{v, v}$ if and only if 
    \begin{equation}
        V=E_1E_2=E_2E_3=E_3E_4=E_4E_1,
    \end{equation}
    \begin{equation}
        E_1\cap E_2=F_1, E_2\cap E_3=F_2, E_3\cap E_4=F_3, E_4\cap E_1 =F_4,
    \end{equation}
    and
    \begin{equation}
        |V:E_1|+|V:E_3|=v=|V:E_2|+|V:E_4|.
    \end{equation}
\end{lemma}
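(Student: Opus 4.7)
The plan is to describe the link $L$ of $\tilde\sigma$ in the local development $\text{St}(\tilde\sigma)$ as a combinatorial object built directly from cosets of the edge and face groups inside $V$, and then translate the statement ``$L \cong K_{v,v}$'' into the three conditions.

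First, I would set up the link as follows. Since the complex of groups is simple, we may regard each $E_j$ and $F_k$ as an actual subgroup of $V$, and the preimages of $e_j$ (respectively $f_k$) in $\text{St}(\tilde\sigma)$ are in natural $V$-equivariant bijection with $V/E_j$ (respectively $V/F_k$). The link $L$ of $\tilde\sigma$ therefore has vertex set $\bigsqcup_{j=1}^{4} V/E_j$ and edge set $\bigsqcup_{k=1}^{4} V/F_k$, with the edge $vF_k$ incident to the two vertices $vE_k$ and $vE_{k+1}$ (indices mod $4$), because the face $f_k$ meets exactly the edges $e_k$ and $e_{k+1}$ at $\sigma$. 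Since the four edges alternate in label between type $i$ and type $i+1$, no $f_k$ can contribute an edge within $V/E_1 \sqcup V/E_3$ or within $V/E_2 \sqcup V/E_4$, so $L$ is automatically bipartite with color classes $A := V/E_1 \sqcup V/E_3$ and $B := V/E_2 \sqcup V/E_4$.

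Next, I would analyze the edges of $L$ between any cyclically adjacent pair $V/E_r$ and $V/E_{r+1}$. All such edges come from $V/F_r$, and the incidence map
\[
V/F_r \;\longrightarrow\; V/E_r \times V/E_{r+1}, \qquad vF_r \longmapsto (vE_r, vE_{r+1}),
\]
factors through $V/(E_r \cap E_{r+1})$ (which is well-defined since $F_r \subseteq E_r \cap E_{r+1}$ by simplicity of the complex of groups). The quotient $V/F_r \twoheadrightarrow V/(E_r \cap E_{r+1})$ is a bijection precisely when $F_r = E_r \cap E_{r+1}$, and a standard double-coset computation shows that the injection $V/(E_r \cap E_{r+1}) \hookrightarrow V/E_r \times V/E_{r+1}$ is a bijection precisely when $V = E_r E_{r+1}$. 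Consequently, exactly one edge of $L$ joins each pair of vertices in $V/E_r \times V/E_{r+1}$ if and only if both $F_r = E_r \cap E_{r+1}$ and $V = E_r E_{r+1}$ hold. Running this through the four cyclic pairs $(1,2), (2,3), (3,4), (4,1)$ yields conditions (1) and (2), while the vertex counts $|A| = |V:E_1| + |V:E_3|$ and $|B| = |V:E_2| + |V:E_4|$ both equalling $v$ yield condition (3). The converse follows by reading each equivalence in the opposite direction.

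The main obstacle I expect is the careful bookkeeping in the middle step, where one must simultaneously ensure that every opposite-color pair is hit by some face-coset (controlled by $V = E_r E_{r+1}$) and that no pair is hit twice (controlled by $F_r = E_r \cap E_{r+1}$); this requires handling the possibly non-normal subgroups $E_r$ via double cosets rather than a naive product decomposition. Once this group-theoretic core is in place, the rest is a routine unpacking of the bipartite structure of $L$.
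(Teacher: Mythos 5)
The paper does not prove this lemma itself---it defers to Futer--Thomas with ``we refer the reader to \cite{futer2012surface}''---so there is no in-paper proof to compare against. Your reconstruction is correct and is essentially the standard argument used in the cited source: describe the link of the local development via cosets $V/E_j$ (vertices) and $V/F_k$ (edges) following Bridson--Haefliger, observe the automatic bipartition into $V/E_1\sqcup V/E_3$ and $V/E_2\sqcup V/E_4$, and reduce ``exactly one edge between each opposite-color pair'' to $F_r=E_r\cap E_{r+1}$ (injectivity) and $V=E_rE_{r+1}$ (surjectivity) via the double-coset computation, with the index sums giving the part sizes $v$.
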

\begin{figure}
    \centering
    \includegraphics[width=0.4\linewidth]{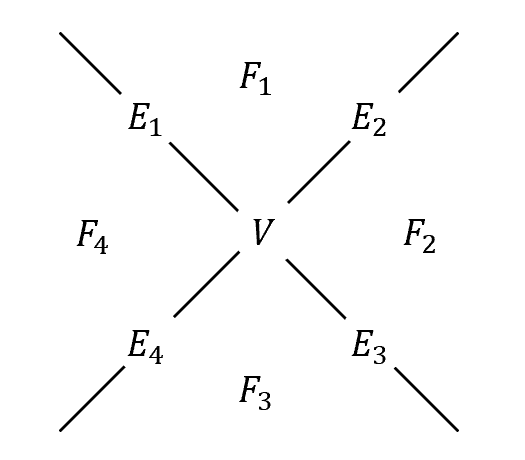}
    \caption{Local groups near $\sigma$}
    \label{fig_lem local const}
\end{figure}
For the proof, we refer the reader to \cite{futer2012surface}.

\subsection{Construction of Surface Tessellation}\label{subsec2.2}
Throughout the paper, compact surfaces are without boundaries. This subsection deals with the method of constructing a tessellation of a hyperbolic surface proposed in \cite{futer2012surface}. 
\\
Edmond-Ewing-Kulkarina \cite{edmonds1982regular} proved the following theorem regarding how many faces are needed for the compacted orientable surface of genus $g$ to be tessellated into a right-angled hyperbolic $p$-gon. 

\begin{lemma}[{Edmond-Ewing-Kulkarina \cite{edmonds1982regular}}]\label{lem_facenum}
    If a compact orientable surface of genus $g$ can be tessellated with $F$ right-angled $p$-gons, the value of $F$ is given by:
    \begin{equation}\label{eq_numface}
        F=\displaystyle\frac{8(g-1)}{p-4}
    \end{equation}
\end{lemma}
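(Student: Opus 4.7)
The plan is to deduce the identity from Euler's formula applied to the tessellation, using the two combinatorial identities that come from the $p$-gon condition (on faces) and the right-angled condition (on vertices). Let $V$, $E$, $F$ denote the number of vertices, edges, and faces of the tessellation.

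First I would count edge--face incidences. Since every face is a $p$-gon and every edge of the tessellation is incident to exactly two faces (the surface has no boundary), double-counting gives
\begin{equation*}
    2E = pF.
\end{equation*}
Next I would count edge--vertex incidences. Because each chamber is a \emph{right-angled} regular $p$-gon, every interior angle at a vertex is $\pi/2$, and the angles at any vertex of the tessellation must sum to $2\pi$; hence exactly four polygons (and therefore four edges) meet at each vertex. Double-counting edge endpoints then yields
\begin{equation*}
    2E = 4V, \qquad \text{so} \qquad V = \frac{pF}{4}.
\end{equation*}

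Finally I would substitute these into Euler's formula $V - E + F = 2 - 2g$ for a closed orientable surface of genus $g$:
\begin{equation*}
    \frac{pF}{4} - \frac{pF}{2} + F = 2 - 2g,
\end{equation*}
which simplifies to $F(p-4)/4 = 2(g-1)$, and solving for $F$ gives the claimed formula $F = 8(g-1)/(p-4)$. There is no real obstacle here beyond correctly justifying that every vertex of the tessellation has degree exactly $4$; this is where the right-angled hypothesis is essential, since without it the vertex figure could involve a different number of polygons and the identity $V = pF/4$ would fail.
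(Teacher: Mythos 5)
Your proof is correct, but it takes a different route from the paper. The paper's proof is metric: it applies Gauss--Bonnet to a single right-angled regular $p$-gon to compute its hyperbolic area as $\tfrac{\pi}{2}(p-4)$, and then applies Gauss--Bonnet to the whole closed surface to get $-F\cdot\tfrac{\pi}{2}(p-4)=2\pi(2-2g)$, from which the formula follows. Your argument is purely combinatorial: the incidence counts $2E=pF$ and $2E=4V$ fed into Euler's formula $V-E+F=2-2g$. The two approaches use the right-angled hypothesis in different places --- the paper uses it to pin down the area of each tile, while you use it to pin down the vertex degree (four right angles summing to $2\pi$ at each smooth point of the surface). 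Your version has the mild advantage of being metric-free once the degree-$4$ condition is granted: it applies verbatim to any cell decomposition of a genus-$g$ surface by combinatorial $p$-gons with all vertices of degree $4$, whereas the Gauss--Bonnet computation needs the hyperbolic structure. Conversely, the paper's route produces the area $\tfrac{\pi}{2}(p-4)$ of a chamber as a byproduct. Both correctly isolate the same essential input, namely that the surface is smooth (no cone points) so that either the curvature integral or the angle sum at each vertex behaves as claimed.
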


\begin{proof}[Sketch of the proof]
    The proof is based on the computation using the Gauss-Bonnet theorem:
    \begin{equation}\label{eq_GaussBonnet}
        \int_M{KdA}+\int_{\partial M}{k_g}ds=2\pi \chi(M)
    \end{equation}
    ($K$ and $k_g$ denote Gaussian curvature of $M$ and the geodesic curvature of $\partial M$, respectively.)

    Using equation \ref{eq_GaussBonnet} for a piece of regular right-angled $p$-gon yields the area of a $p$-gon is $\frac{\pi}{2}(p-4)$. Using \ref{eq_GaussBonnet} again for the entire tessellated surface, we get 
    \begin{equation}\label{eq_GB2}
        -F(\text{area of a }p\text{-gon})=2\pi(2-2g).
    \end{equation}
    Equation \ref{eq_numface} follows immediately from \ref{eq_GB2}.
\end{proof}

Futer and Thomas' paper \cite{futer2012surface} presents a method for tessellating a compact orientable surface of genus $g$ using $F$ distinct right-angled hyperbolic $p$-gons. Their tessellation method is divided into two cases where $F$ is divisible by 4 or not. 
\begin{figure}
    \centering
    \includegraphics[width=0.8\textwidth]{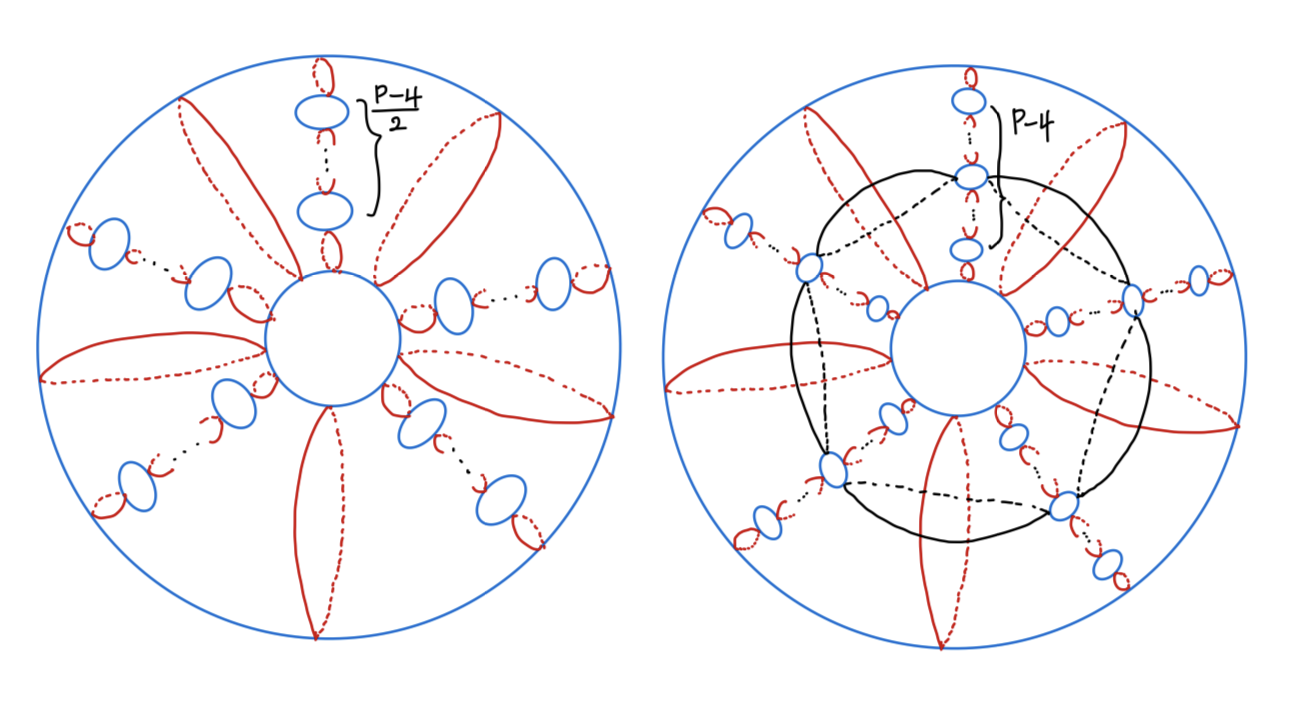}
    \caption{left: $p$ is even, right: $p$ is odd}
    \label{fig: tessellation when F is divisible by 4}
\end{figure}

If $F$ is divisible by 4, the tessellation can be visualized clearly as shown in Figure \ref{fig: tessellation when F is divisible by 4}. The left-hand side of Figure \ref{fig: tessellation when F is divisible by 4} illustrates the case where $F$ is divisible by 4 and $p$ is even. In this case, four pieces of $p$-gons colored in Figure \ref{fig: tessellation when F is divisible by 4} generate one \textit{block}, which is a surface of genus $\frac{p-4}{2}$ with two boundary circles. The right-hand side of Figure \ref{fig: tessellation when F is divisible by 4} depicts the case where $F$ is divisible by 4 and $p$ is odd. Since $F=\frac{8(g-1)}{p-4}$, $F$ is divisible by 8 if $p$ is odd. In this case, eight pieces of $p$-gons colored in Figure \ref{fig: tessellation when F is divisible by 4} are considered as one \textit{block}, which is a surface of genus $p-4$ with two boundary circles (See Figure \ref{fig: tessellation when F is divisible by 4}). By connecting $\frac{F}{8}$ blocks in a circular shape, a genus $g$ hyperbolic surface is obtained.

\begin{figure}
    \centering
    \includegraphics[width=0.8\textwidth]{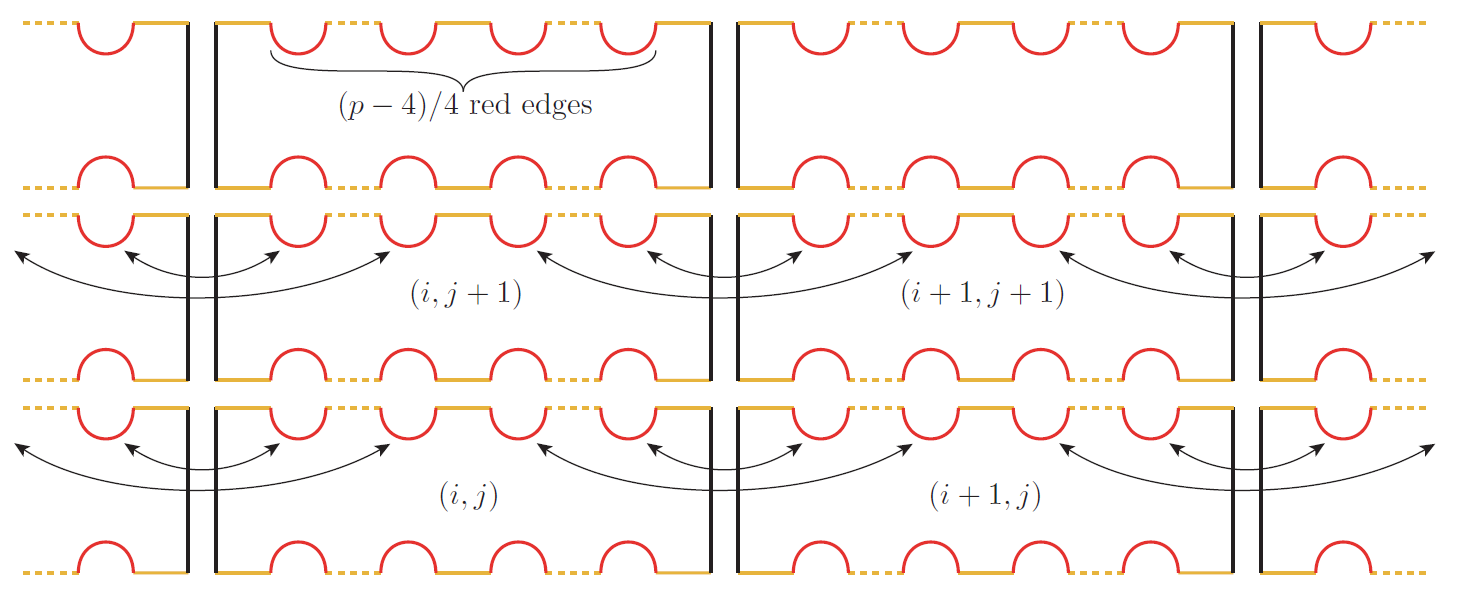}
    \caption{tessellation when $F$ is not divisible by 4 (Figure from \cite{futer2012surface})}
    \label{fig: tessellation when F is not divisible by 4}
\end{figure}
For the case where $F$ is not divisible by 4, $F$ is a composite number. Remark that \cite{futer2012surface} is not proposing a tessellation method when $F$ is prime. Consider the situation where $F$ many $p$-gons are arranged rectangularly, as shown in Figure \ref{fig: tessellation when F is not divisible by 4}. Let $F=ab$, and label the position from $(1, 1)$ to $(a, b)$ according to the $p$-gon's location in the $a \times b$ rectangular arrangement. A tessellated genus $g$ hyperbolic surface can be obtained by the following steps:
\begin{enumerate}
    \item Attach the top side and bottom side of the rectangular arrangement. Similarly, attach the left and right sides of the rectangular arrangement.This forms a torus with $\frac{ab(p-4)}{4}$ holes.
    \item Pair the holes as shown in the figure \ref{fig: tessellation when F is not divisible by 4} and glue them together. This results in a genus $g$ surface.
\end{enumerate}

Futer and Thomas's paper \cite{futer2012surface} provides more detailed information about these tessellation methods. In this paper, we will only focus on the parity of the length of the loops formed by the quotient of geodesics in the original building, omitting the detailed construction process.

\section{Homology group of tessellated surface}\label{sec3}

In this section, we prove several properties of the first homology group of a hyperbolic tessellated compact surface before proving the main result in Section \ref{sec4}.  A \emph{loop} refers to a simple closed curve on the surface.

We begin with the most general property, which holds for any compact surface $X$ without boundary, regardless of whether $X$ is tessellated.
Theorem \ref{thm_genloopbasis} states that the closed loops satisfying certain conditions generate the first homology group of a surface. This result is of independent interest beyond its application to the right-angled tessellated surface addressed in this paper.

\begin{theorem}\label{thm_genloopbasis}
    Let $X$ be a 2-dimensional compact orientable surface, and let $S_1, S_2, \cdots, S_k$ be homeomorphic images of $S^1$ on $X$ such that $(X, S_1 \cup S_2 \cup \cdots \cup S_k)$ is a good pair. The intersection of $S_i$ and $S_j$($i\neq j$) are either empty or consists of a single point, and the intersection of $S_i$, $S_j$, and $S_k$ is empty for any distinct integers $i, j, k$. If $X/{S_1 \cup S_2 \cup \cdots \cup S_k}$ is orientable and has a trivial first homology group, then $[S_1], [S_2], \cdots, [S_k]$ generate the first homology group of $X$.
\end{theorem}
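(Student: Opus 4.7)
The plan is to apply the long exact sequence of the pair $(X,A)$ where $A := S_1 \cup \cdots \cup S_k$. Using the good-pair hypothesis to replace $H_*(X,A)$ by $\tilde H_*(X/A)$, the assumption $H_1(X/A) = 0$ immediately gives surjectivity of $i_* : H_1(A) \to H_1(X)$. The same hypothesis also forces every connected component $U$ of $X \setminus A$ to be an open disk: a long-exact-sequence computation gives $\tilde H_1(\bar U / \partial \bar U) \cong \mathbb{Z}^{2g_U + b_U - 1}$ for an orientable compact surface $\bar U$ of genus $g_U$ with $b_U \geq 1$ boundary circles, and $H_1(X/A)$ is the direct sum of these contributions over components. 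Hence $A$ is the 1-skeleton of a CW structure on $X$ whose 2-cells are the face disks $\{D\}$, and the connecting map $\partial : H_2(X,A) \to H_1(A)$ sends each face class $[D]$ to its cellular boundary cycle $\partial D$.

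The theorem thus reduces to the combinatorial statement that $[S_1], \dots, [S_k]$ together with $\{\partial D\}_D$ generate $H_1(A)$. To handle this, I introduce the intersection graph $I$ (one vertex for each $S_i$, one edge for each intersection point of $A$) and the natural collapse $\pi : A \to I$ sending each circle to its vertex. A direct half-edge computation shows that a cellular 1-cycle $c \in C_1(A)$ satisfies $\pi_* c = 0$ if and only if $c$ has a constant coefficient on the arcs of each $S_i$, so $\ker \pi_* = \langle [S_1], \dots, [S_k] \rangle$ as an integral subgroup. Combined with the easy surjectivity of $\pi_*$ (obtained by lifting $I$-cycles to $A$ circle-by-circle), this yields the short exact sequence
\[
0 \longrightarrow \langle [S_1], \dots, [S_k] \rangle \longrightarrow H_1(A) \xrightarrow{\pi_*} H_1(I) \longrightarrow 0,
\]
so the combinatorial statement becomes: the corner cycles $\{\pi_*(\partial D)\}_D$ generate $H_1(I)$.

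The main obstacle is this last generation claim. My plan is to form the 2-complex $J := I \cup_{\{\pi_* \partial D\}} \bigsqcup_D D^2$, so that the claim becomes $H_1(J) = 0$, and to exploit the cellular extension $\Pi : X \to J$ of $\pi$ that sends each face $D$ homeomorphically onto the corresponding 2-cell of $J$. Chasing the commutative diagram between the long exact sequences of $(X,A)$ and $(J,I)$, together with pushing forward the fundamental class $[X] \in H_2(X)$ and an Euler characteristic computation $\chi(J) = k - V + F = k + 2 - 2g$, should give rank $H_2(J) = k + 1 - 2g$ and hence rank $H_1(J) = 0$. The delicate point is ruling out torsion in $H_1(J)$, so that the generation is integral rather than merely rational; this uses orientability of $X$ and the disk decomposition in an essential way, and is the step that will require the most care.
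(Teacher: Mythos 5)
Your reductions are correct as far as they go, and they take a genuinely different (and in one respect more careful) route than the paper: the paper's proof combines the long exact sequence of the pair $(X,A)$ with a generalized Mayer--Vietoris argument asserting that $[S_1],\dots,[S_k]$ already generate $H_1(A)$ because each $\tilde H_0(U_i\cap U_j)$ vanishes; that assertion is exactly what your short exact sequence $0\to\langle [S_1],\dots,[S_k]\rangle\to H_1(A)\to H_1(I)\to 0$ shows to be false whenever the intersection graph $I$ has a cycle (e.g.\ three circles pairwise meeting in three distinct points give $H_1(A)\cong\mathbb{Z}^4$). So you have correctly isolated the residual content of the theorem: the corner cycles $\pi_*(\partial D)$ must generate $H_1(I)$, equivalently $H_1(J)=0$.

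That last step is where there is a genuine gap, and the method you propose for it cannot close it because it is circular. A direct computation of $\ker\bigl(\partial_2\colon C_2(J)\to C_1(I)\bigr)$ shows that a rational $2$-cycle of $J$ is the same thing as a function $c$ on the faces of $X$ whose jump across each $S_i$ is a constant $t_i$, and such a $c$ exists precisely for those tuples $(t_i)$ with $\sum_i t_i[S_i]=0$ in $H_1(X;\mathbb{Q})$ (integrate the jumps along the dual graph; the monodromy around a dual loop $\gamma$ is $\sum_i t_i(\gamma\cdot S_i)$, and the intersection form is nondegenerate). Hence $\operatorname{rank}H_2(J)=k+1-\dim_{\mathbb{Q}}\operatorname{span}\{[S_i]\}$, so the identity $\operatorname{rank}H_2(J)=k+1-2g$ is \emph{equivalent} to the rational form of the theorem rather than an independent input. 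Pushing forward the fundamental class only exhibits the single cycle with $t_i\equiv 0$, i.e.\ the lower bound $\operatorname{rank}H_2(J)\ge 1$, which is the wrong direction; the Euler characteristic relation $\chi(J)=k+2-2g$ merely trades the unknown $\operatorname{rank}H_2(J)$ for the unknown $\operatorname{rank}H_1(J)$; and the diagram chase between the sequences of $(X,A)$ and $(J,I)$ reproduces the reduction you already have. The torsion issue you flag is real but sits downstream of a rank bound that has not been supplied. To finish you need a genuinely new input — for instance, a Poincaré duality argument showing that no nonzero $\alpha\in H_1(X;\mathbb{Q})$ satisfies $\alpha\cdot[S_i]=0$ for all $i$, exploiting the hypothesis that all complementary components are disks — and that is precisely the hard part of the statement.
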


\begin{proof}
    The proof is based on the generalized Mayer-Vietoris principle, which can be found in Chapter 8 of \cite{bott2013differential}. From now on, let $A$ denote $S_1 \cup S_2 \cup \cdots \cup S_k$.
    
    First, consider a sufficiently small neighborhood $U_i$ of $S_i$ in $X$, where $U_i \cap U_j \cap U_k$ is empty for any distinct integers $i, j, k$. Let $\iota_i : U_i \hookrightarrow \cup_{i}U_i$ be an inclusion map for each $i$, and define $\iota=\iota_1 \oplus \iota_2 \oplus \cdots \oplus \iota_k$.
    
    We have 
    \begin{equation}\label{eq_genMV}
        \cdots \rightarrow\bigoplus_{i<j}{\Tilde{H}_1(U_i \cap U_j)}\rightarrow \bigoplus_{i}{\Tilde{H}_1(U_i)}\xrightarrow[]{\iota_{*}}\Tilde{H}_1(A)\rightarrow\bigoplus_{i<j, U_i \cap U_j \neq \emptyset}\Tilde{H}_0(U_i \cap U_j)\rightarrow \cdots 0.
    \end{equation}
    since there are no triple intersections among the open cover $\{ U_i \}$ of $S_1 \cup S_2 \cup \cdots \cup S_k$, the map $\iota_*$ is an epimorphism. This holds because each $\Tilde{H}_0(U_i \cap U_j)$ is trivial for every $i, j$ such that $U_i \cap U_j$ is nonempty. For each $i$, $\Tilde{H}_{1}(U_i)$ represents the homology class $[S_i]$, and hence $[S_1], [S_2], \cdots, [S_k]$ generate $\Tilde{H}_{1}(A)$.

    Given the assumption that $(X, A)$ is a good pair, the long exact sequence for this pair is as follows:
    \begin{equation}\label{eq_les}
        \cdots \rightarrow \Tilde{H}_2(X/A)\rightarrow \Tilde{H}_1(A)\xrightarrow[]{\mathit{i}_*}\Tilde{H}_1(X)\xrightarrow[]{\mathit{j}_*}\Tilde{H}_1(X/A)\rightarrow\cdots 
    \end{equation}
    where $\mathit{i}$ is the inclusion map $A\hookrightarrow X$, and $\mathit{j}$ is the quotient map $X\rightarrow X/A$. Since $\Tilde{H}_1(X/A)$ is trivial, the induced map $\mathit{i}_*$ is an epimorphism. Consequently, the images of $S_i$ under the map $\mathit{i}\circ\iota_{i}$ generate $\Tilde{H}_1(X) =H_1(X)$. 
\end{proof}

Now we need to specify Theorem \ref{thm_genloopbasis} to obtain results about right-angled tessellated hyperbolic surface $X$. Before stating Corollary \ref{cor_loopbasis}, which will be used in Section \ref{sec4}, the classification of the loops must first be defined. In the following definitions, let $X$ be the tessellated surface. 

\begin{defn}[boundary loop]\label{def_bdryloop}
     A closed curve $l$ on $X$ composed of the $n$ distinct edges of polygons is called a \emph{boundary loop of length $n$}.
\end{defn}

\begin{defn}[polygonal loop]\label{def_bbl}
    A closed curve $l$ is called a \emph{polygonal loop} if it is a boundary of a single $p$-gon.
\end{defn}

\begin{defn}[boundary geodesic loop]\label{def_bgl}
    A closed curve $l$ is called a \emph{boundary geodesic loop} if it is both a boundary loop (as defined in Definition \ref{def_bdryloop}) and a closed geodesic loop.
\end{defn}
For quotients of Bourdon's building $I_{p, \mathbf{q}}$, a boundary geodesic loop can be defined as a loop that is a quotient of a boundary geodesic on $I_{p, \mathbf{q}}$.

However, if $X$ is a hyperbolic surface tessellated by right-angled polygons, it is clear that the intersection of more than three boundary geodesic loops is empty. Thus, if the given right-angled tessellation $X$ has the property that the intersection of any two boundary geodesic loops is either empty or consists of a single point, then $X$ automatically satisfies all the conditions required for Theorem \ref{thm_genloopbasis} to hold.

\begin{corollary}\label{cor_loopbasis}
    Let $X$ be a 2-dimensional hyperbolic compact surface tessellated by right-angled regular $p$-gons. If the intersection of any two boundary geodesic loops on $X$ is either empty or consists of a single point, than the set of boundary geodesic loops on $X$ generates the first homology group of $X$.
\end{corollary}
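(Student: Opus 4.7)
The plan is to verify the hypotheses of Theorem \ref{thm_genloopbasis} with $\{S_1, \ldots, S_k\}$ taken to be the collection of all boundary geodesic loops on $X$, and then invoke that theorem directly. Each $S_i$ is a simple closed curve by Definition \ref{def_bgl}, the pairwise intersection condition is exactly the hypothesis of the corollary, and $(X, \bigcup_i S_i)$ is a good pair because it is a CW pair. The substantive work thus reduces to showing (a) that triple intersections among the $S_i$ are empty, and (b) that $X/\bigcup_i S_i$ is orientable with trivial first homology.

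For (a), I would argue that at every vertex $v$ of the tessellation, exactly four $p$-gons meet: their interior angles are $\pi/2$ and must sum to $2\pi$. Hence four edges are incident to $v$, arranged cyclically and partitioned into two pairs of \emph{opposite} edges. Any geodesic that traverses $v$ along tessellation edges is forced to enter and leave along one such pair, so at most two boundary geodesic loops pass through $v$; triple intersections are therefore impossible.

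For (b), I would identify $\bigcup_i S_i$ with the full 1-skeleton $X^{(1)}$. Given any edge $e$, extending $e$ through both endpoints by the "continue along the opposite edge" rule is deterministic, and by finiteness of $X^{(1)}$ the extension must eventually close up into a loop that is geodesic at every traversed vertex, hence a boundary geodesic loop containing $e$. Once this is established, $\bigcup_i S_i = X^{(1)}$, and the quotient $X/\bigcup_i S_i$ is obtained by collapsing the entire 1-skeleton, producing a wedge of $F$ two-spheres (one per $p$-gon face). This wedge is orientable and satisfies $\tilde H_1 = 0$, so all hypotheses of Theorem \ref{thm_genloopbasis} hold and the conclusion follows immediately.

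The main obstacle will be verifying that the straight-through extension of an edge yields a genuinely \emph{simple} closed curve, as required by Definition \ref{def_bgl}. A priori the extension could revisit a vertex along the other pair of opposite edges, producing a self-intersecting trace rather than a simple loop, in which case $e$ would lie in no boundary geodesic loop and the identification $\bigcup_i S_i = X^{(1)}$ would fail. The pairwise-intersection hypothesis, combined with the rigidity of the right-angled geometry, is precisely what rules this out: a self-intersecting extension would force two distinct boundary geodesic loops to share more than one vertex, contradicting the standing hypothesis of the corollary.
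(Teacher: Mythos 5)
Your proposal follows essentially the same route as the paper: take the $S_i$ to be all boundary geodesic loops, observe that their union is the full $1$-skeleton so that $X/\bigcup_i S_i$ is a wedge of $F$ spheres with trivial $\tilde{H}_1$, check that triple intersections are excluded by the four-valence of vertices in a right-angled tessellation, and invoke Theorem \ref{thm_genloopbasis}. Your verification of the triple-intersection condition is in fact more explicit than the paper's, which only remarks that it is ``clear.''

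The one place where you go beyond the paper is also the one place where your argument does not hold up. You correctly isolate the crux: the identification $\bigcup_i S_i = X^{(1)}$ requires that the straight-through extension of every edge close up into a \emph{simple} closed curve, since only simple closed curves qualify as boundary geodesic loops under Definition \ref{def_bgl}. But your proposed resolution --- that a self-intersecting extension ``would force two distinct boundary geodesic loops to share more than one vertex'' --- does not follow. A self-intersecting closed geodesic trace is a single curve; cutting it at a double point produces closed curves that turn at that vertex and hence are not geodesic, so no pair of genuine boundary geodesic loops with large intersection arises, and the corollary's hypothesis (which quantifies only over boundary geodesic loops) is never contradicted. To be fair, the paper's own proof simply asserts that $A$ ``is the union of all the edges contained in $X$'' without justification, so this is a gap you have surfaced rather than created; but as written your closing argument does not fill it. Either the corollary's hypotheses must be read as including the requirement that every edge lie on a boundary geodesic loop, or that property must be checked directly for the tessellations to which the corollary is applied, as the paper implicitly does.
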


\begin{proof}
    Suppose that $X$ is a genus $g$ compact hyperbolic surface tessellated by $F$ right-angled regular $p$-gons. Let $S_1, S_2, \cdots, S_k$ denote all the basic boundary loops contained in $X$, and let $A=S_1 \cup S_2 \cup \cdots \cup S_k$, a subspace of $X$. Since $A$ is the union of all the edges contained in $X$, it follows that $X/A$ is homeomorphic to the wedge of $F$ numbers of $S^2$. It is evident that $(X, A)$ is a good pair and that $\Tilde{H}_1(X/A)$ is trivial. Furthermore, the remaining conditions required for Theorem \ref{thm_genloopbasis} have already been assumed and verified in the arguments above. Therefore, by Theorem \ref{thm_genloopbasis}, it follows that $S_1, S_2, \cdot, S_k$ generate $H_1(X)$.
\end{proof}

The tessellated surface $X$ that satisfies the conditions of Corollary \ref{cor_loopbasis} will serve as the base complex for the quotient complex of groups $I_{p, \mathbf{q}}/\Gamma_{p, \mathbf{q}, g}$ is Section \ref{sec4}. It should be noted that the regularity of the $p$-gon is not used in the proof of Corollary \ref{cor_loopbasis}. In Section \ref{sec4}, the result of Corollary \ref{cor_loopbasis} is extended to the divided right-angled regular tessellation.

\section{Existence of $\Gamma_{p, \mathbf{q}, g}$}\label{sec4}
In this section, we find the sufficient conditions under which a lattice $\Gamma_{p, \mathbf{q}, g}$ exists for a given $\mathbf{q}$. We begin by generalizing Futer and Thomas \cite{futer2012surface}.

\begin{prop}\label{prop_genFT}
    In the same context as in Lemma \ref{lem_localconst}, let us assume, without loss of generality, that the index $i$ in the edge group $E_{i}$ increases counterclockwise along the boundary of each $p$-gon. The local development $St(\Tilde{\sigma})$ has a link $L$ as the complete bipartite graph $K_{q_{i}, q_{i+1}}$ with $i$ increasing sequentially counterclockwise if and only if
    \begin{equation}\label{eq4.1}
    V=E_{1}E_{2}=E_{2}E_{3}=E_{3}E_{4}=E_{4}E_{1}
    \end{equation}
    \begin{equation}\label{eq4.2}
        E_{1}\cap E_{2}=F_{1}\text{, }E_{2}\cap E_{3}=F_{2}\text{, } E_{3}\cap E_{4}=F_{3}\text{, }E_{4}\cap E_{1}=F_{4}
    \end{equation}
    and
    \begin{equation}\label{eq4.3}
        |V:E_{1}|+|V:E_{3}|=q_{i}\text{, }|V:E_{2}|+|V:E_{4}|=q_{i+1}
    \end{equation}
\end{prop}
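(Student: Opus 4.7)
The plan is to mimic the proof of Lemma \ref{lem_localconst} almost verbatim, tracking only where the equality $|V:E_1|+|V:E_3|=|V:E_2|+|V:E_4|$ in that lemma was used, and relaxing it to the separate equalities of (\ref{eq4.3}). The link $L$ of the local development $\mathrm{St}(\tilde\sigma)$ is naturally bipartite: its vertices correspond to edges of the tiling emanating from $\sigma$, which come in two flavors (type $\{i\}$ and type $\{i+1\}$ by the labeling around $\sigma$), and edges of $L$ correspond to face-corners at $\sigma$, each of which joins a type-$i$ edge to a type-$(i+1)$ edge. So under the hypothesis that $L$ is a \emph{complete} bipartite graph, the two parts automatically have sizes that must equal the two branching numbers, and the only question is which indices $|V:E_j|$ record those sizes.

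First I would set up the local development: the $V$-orbits of the four edges correspond to cosets $V/E_1, V/E_2, V/E_3, V/E_4$, so the vertex set of $L$ has $\sum_j |V:E_j|$ elements. The type-$i$ edges meeting $\sigma$ in $\mathrm{St}(\sigma)$ are the two opposite edges carrying $E_1$ and $E_3$ (under the stated counterclockwise labeling), and similarly for $E_2$ and $E_4$. Hence the two parts of the bipartition of $L$ have exactly $|V:E_1|+|V:E_3|$ vertices of type $i$ and $|V:E_2|+|V:E_4|$ vertices of type $i+1$, which is what makes (\ref{eq4.3}) the correct branching condition once we want $L=K_{q_i,q_{i+1}}$.

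Next I would count faces. Each face-corner at $\sigma$ corresponds to a double coset $E_j\backslash V / E_{j+1}$ (indices mod $4$), realized on the common boundary of adjacent edges $E_j,E_{j+1}$ with face group $F_j = E_j\cap E_{j+1}$. By the standard bijection, the number of edges of $L$ lying in the corner between the $E_j$-edge and the $E_{j+1}$-edge equals $|V:E_j|\cdot|V:E_{j+1}|/|V:F_j|$ if $F_j = E_j\cap E_{j+1}$, and equals $|V:E_j|\cdot|V:E_{j+1}|$ exactly when $V=E_jE_{j+1}$ (so that there is a single double coset). Thus the identities in (\ref{eq4.1}) say that each corner contributes a complete bipartite block of edges between the two vertex classes it spans, and (\ref{eq4.2}) ensures that the face groups $F_j$ are exactly the intersections $E_j\cap E_{j+1}$ as subgroups of $V$. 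Combined, these force $L$ to be $K_{q_i,q_{i+1}}$ once (\ref{eq4.3}) gives the correct part sizes; conversely, if $L=K_{q_i,q_{i+1}}$, the presence of all possible edges in $L$ forces each corner to span a complete bipartite piece, which is exactly (\ref{eq4.1}), and the absence of multiple edges forces (\ref{eq4.2}).

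I do not anticipate any serious obstacle here, since the only substantive departure from Futer--Thomas is bookkeeping: the two partite sizes of $L$ are allowed to differ. The mild technical point is to check carefully that, under the \emph{counterclockwise} labeling convention, the pair $(E_1,E_3)$ is indeed the pair of edge groups on type-$i$ edges and $(E_2,E_4)$ the pair on type-$(i+1)$ edges, so that the two equalities of (\ref{eq4.3}) are assigned to $q_i$ and $q_{i+1}$ in the correct order; this follows from Figure \ref{fig_lem local const} together with the indexing convention of Definition \ref{def_Ipq}.
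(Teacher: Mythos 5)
Your proposal is correct and matches the paper's approach exactly: the paper's entire proof is the remark that the argument is verbatim that of Lemma \ref{lem_localconst} (Futer--Thomas), with the single modification of splitting the branching condition $|V:E_1|+|V:E_3|=v=|V:E_2|+|V:E_4|$ into the two separate equalities of (\ref{eq4.3}), which is precisely the bookkeeping you carry out. (One cosmetic slip: the number of link edges in the corner carrying $F_j$ is $|V:F_j|$, and the conditions $F_j=E_j\cap E_{j+1}$ and $V=E_jE_{j+1}$ make the map $vF_j\mapsto(vE_j,vE_{j+1})$ injective and surjective respectively, so $|V:F_j|=|V:E_j|\cdot|V:E_{j+1}|$; your displayed quotient formula is garbled but the surrounding logic is right.)
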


In the above Lemma, $E_i$ and $F_j$ are considered as subgroups of $V$. The proof is verbatim the same as the proof of lemma \ref{lem_localconst}.

Lemma \ref{lem_difference of homotopic loop} shows that, in a polygonal tessellated surface, the difference between two homotopic loops can be expressed as the sum of several polygonal loops.

\begin{lemma}\label{lem_difference of homotopic loop}
    Let $X$ be a compact orientable surface tessellated by polygons, and let $l$ and $k$ be boundary loops of $X$. If $[l_0]$ and $[l_1]$ represent the same homology class in $H_1(X)$, then there exist polygonal loops $k_1, k_2, \cdots, k_n$ such that $l_1=l_0+\displaystyle\sum_{i=1}^{n}{k_i}$. (Throughout this paper, summation between loops means the composition of loops, while disregarding the orientation.)
\end{lemma}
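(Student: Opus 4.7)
The plan is to recognize this lemma as a direct reformulation of cellular homology. The polygonal tessellation endows $X$ with a CW structure in which the 2-cells are the polygons, the 1-cells are the tiling edges, and the 0-cells are the tiling vertices. In this CW structure, a polygonal loop in the sense of Definition~\ref{def_bbl} is exactly the cellular boundary $\partial_{2}F$ of a face $F$, so the subgroup of $C_{1}(X)$ generated by polygonal loops coincides with $\mathrm{im}(\partial_{2})$. To match the paper's convention that sums of loops disregard orientation, the cleanest setting is $\mathbb{Z}/2$-cellular homology, and I would work there throughout.

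First I would check that every boundary loop of $X$ in the sense of Definition~\ref{def_bdryloop} represents a class in $H_{1}(X;\mathbb{Z}/2)$. A boundary loop is a simple closed curve formed by distinct edges, so at each vertex either $0$ or $2$ of its edges meet; its cellular $\partial_{1}$ therefore vanishes mod $2$, and it is a $1$-cycle. The hypothesis $[l_{0}]=[l_{1}]$ in $H_{1}(X)$ (integer coefficients) passes through the coefficient-reduction map $H_{1}(X;\mathbb{Z})\to H_{1}(X;\mathbb{Z}/2)$, so $l_{0}$ and $l_{1}$ are also homologous mod $2$.

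Next I would unpack the definition of $H_{1}(X;\mathbb{Z}/2)$: there exist finitely many (not necessarily distinct) faces $F_{i_{1}},\ldots,F_{i_{n}}$ of the tessellation such that
\[
l_{1}+l_{0} \;=\; \partial_{2}\!\Bigl(\sum_{j=1}^{n} F_{i_{j}}\Bigr) \;=\; \sum_{j=1}^{n} \partial_{2}F_{i_{j}} \quad \text{in } C_{1}(X;\mathbb{Z}/2).
\]
Setting $k_{j}:=\partial_{2}F_{i_{j}}$ exhibits each $k_{j}$ as a polygonal loop in the sense of Definition~\ref{def_bbl}, and rearranging the displayed identity gives $l_{1}=l_{0}+\sum_{j=1}^{n}k_{j}$ as unordered edge sums, which is precisely the conclusion sought.

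The only real obstacle is notational: one must confirm that the paper's informal rule \emph{``summation between loops means the composition of loops, while disregarding the orientation''} matches addition in $C_{1}(X;\mathbb{Z}/2)$, so that the cellular identity above is the same as the loop identity in the lemma statement. If instead one prefers to carry out the argument over $\mathbb{Z}$, then each polygonal loop $\partial_{2}F$ must be taken with multiplicity $|n_{F}|$, where $\sum n_{F}F$ is the integer $2$-chain witnessing the homology; the conclusion is unchanged because the lemma explicitly allows repetitions among the $k_{j}$.
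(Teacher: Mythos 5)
Your proof is correct, but it takes a genuinely different route from the paper's. The paper argues by surgery on the surface: it cuts $X$ along $l_0\cup l_1$ (after resolving the intersection points into non-crossing curves $l_0'$, $l_1'$) and invokes Marden's result to identify the complementary region as an annulus (or a quotient of one), which, being a union of tiles, exhibits the difference of the two loops as a sum of polygonal boundaries. Your argument is purely algebraic: the tessellation is a CW structure, polygonal loops are exactly the cellular boundaries $\partial_2 F$ of faces, and two $1$-cycles that are homologous over $\mathbb{Z}$ (hence over $\mathbb{Z}/2$) differ by an element of $\mathrm{im}(\partial_2)$, i.e.\ by a sum of face boundaries. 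This buys two things. First, it is shorter and sidesteps a delicate point in the paper's argument: disjoint \emph{homologous} simple closed curves need not cobound an annulus (they cobound a subsurface, possibly of higher genus, though that subsurface is still a union of polygons, so the paper's conclusion survives). Second, it makes explicit that the lemma is really a mod-$2$ statement, which is consistent with the paper's convention that loop sums disregard orientation and with how the lemma is used later (only parities of edge counts matter, e.g.\ in Lemma \ref{lem_orientation} and Proposition \ref{prop_mainthm}). The one point you should state outright rather than flag as ``notational'' is that ``composition of loops disregarding orientation'' is to be read as the symmetric difference of edge sets, i.e.\ addition in $C_1(X;\mathbb{Z}/2)$; with that reading your displayed identity is precisely the lemma's conclusion, and your observation that over $\mathbb{Z}/2$ the witnessing $2$-chain is just a set of faces (no multiplicities needed) closes the argument.
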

\begin{proof}
    If the intersection of $l_0$ and $l_1$ is empty, cutting $X$ along $l_0$ and $l_1$ yields a submanifold with boundary $R$ that is homeomorphic to $S^1 \times [0, 1]$ (See \cite{marden1966regions}). In this case, it is evident that $R$ is a union of several polygons, and $l_0$ can be expressed as the composition of $l_1$ and the polygonal loops corresponding to those polygons.

    If the intersection of $l_0$ and $l_1$ is nonempty, assume there are $n$ intersection points. In this case, it is possible to construct boundary loops $l_0'$ and $l_1'$ such that $l_0 \cup l_1 = l_0' \cup l_1'$ and the loops do not cross each other. By applying the same arguments as in \cite{marden1966regions}, the region obtained by cutting $X$ along $l_0$ and $l_1$ is homeomorphic to $S^1 \times [0, 1]/~$, where $(x_i, 0)~(x_i, t)$ for $t \in [0, 1]$ and $i=1, 2, \cdots, n$ (with each $x_i$ being a distinct point on $S^1$). It is clear that this region is also a union of polygons, allowing us to reach the same conclusion as in the previous case.

\end{proof}

Before proving the main theorem, Theorem \ref{prop_mainthm}, we need to prove the possibility of providing an orientation to the tessellated surface, as stated in Lemma \ref{lem_orientation}.

\begin{defn}[dual graph of tessellated surface]\label{def_dualgraph}
    Let $X$ be a 2-dimensional compact surface tessellated by polygons. The \emph{dual graph} of $X$, denoted by $\text{Dual}(X)$, is defined as a graph $G=(V, E)$ determined by the following conditions:
    \begin{enumerate}
        \item Each vertex in $V(G)$ corresponds to a face of $X$.
        \item For any distinct vertices $\sigma, \tau \in V(G)$, the edge connecting $\sigma$ and $\tau$ is an element of $E(G)$ if and only if the face corresponding to $\sigma$ and the face corresponding to $\tau$ share an edge on $X$.
    \end{enumerate}
\end{defn}

\begin{lemma}\label{lem_orientation}
    Let $X$ be a 2-dimensional compact hyperbolic surface tessellated by right-angled $p$-gons, such that the intersection of any two boundary geodesic loops on $X$ is either empty or consists of a single point. If every boundary geodesic loop in $X$ has an even length, then it is possible to assign an orientation (counterclockwise or clockwise) to each $p$-gon, ensuring that any two adjacent $p$-gons have opposite orientations.
\end{lemma}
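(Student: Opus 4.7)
The plan is to recast the orientation question as a $2$-coloring problem and then solve it at the level of $\mathbb{Z}/2$ homology. An orientation assignment with opposite orientations on any two adjacent $p$-gons amounts to a bipartition of the face set, i.e.\ a subset $F$ of faces such that every edge of $X$ has exactly one adjacent face in $F$. Let $E$ denote the $1$-skeleton of $X$, regarded as a $\mathbb{Z}/2$ $1$-chain. Because every vertex of the tessellation has degree $4$ (four right-angled $p$-gons meet there), $\partial E = 0$, so $E$ is a $\mathbb{Z}/2$ $1$-cycle. The condition that $F$ is a valid bipartition translates to $\partial F = E$ mod $2$, since $\partial F$ records edges adjacent to an odd number of faces in $F$ and every edge is adjacent to exactly two faces. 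Consequently, the lemma reduces to the statement $[E] = 0$ in $H_1(X;\mathbb{Z}/2)$.

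To verify $[E] = 0$, I would apply Poincar\'e duality on the closed orientable surface $X$: it suffices to show $[E]\cdot [\alpha] = 0$ for every $\alpha \in H_1(X;\mathbb{Z}/2)$. By Corollary~\ref{cor_loopbasis}, whose single-point intersection hypothesis is precisely the one imposed on $X$ in this lemma, $H_1(X;\mathbb{Z}/2)$ is generated by the boundary geodesic loops. Hence it is enough to verify $[E] \cdot [\gamma] = 0$ for every boundary geodesic loop $\gamma$.

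The remaining step is a direct intersection calculation resting on one combinatorial observation: each edge of $X$ lies in exactly one boundary geodesic loop. This is because at every vertex the four incident edges split canonically into two pairs of collinear edges, and a right-angled geodesic along the $1$-skeleton must exit a vertex along the edge collinear to its entry, giving a unique straight continuation that closes up (the tessellation being finite). Thus $E \equiv \sum_\beta \beta \pmod 2$, summed over all boundary geodesic loops. Since $X$ is orientable, $[\gamma]\cdot[\gamma] = 0$ because the mod $2$ intersection pairing on an oriented closed surface is alternating. For $\beta\neq \gamma$, the hypothesis gives $[\beta]\cdot[\gamma] \in \{0,1\}$, with value $1$ iff $\beta$ passes through some vertex of $\gamma$. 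At each vertex $v$ of $\gamma$ the ``transverse'' boundary geodesic loop at $v$ is uniquely determined, and the single-point intersection hypothesis forces distinct vertices of $\gamma$ to give distinct transverse loops. Counting shows $[E]\cdot[\gamma] \equiv \mathrm{length}(\gamma) \pmod 2$, which vanishes by the even-length assumption; hence $[E] = 0$, and extracting a face subset $F$ with $\partial F = E$ and orienting $F$ counterclockwise and $F^c$ clockwise completes the proof.

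The main point that warrants care is the combinatorial claim that each edge lies in a \emph{unique} boundary geodesic loop, together with the possibility that a boundary geodesic loop self-intersects (using all four edges at some vertex). If $\gamma$ visits a vertex $v$ twice, there is no transverse loop distinct from $\gamma$ at $v$, but such double-visit vertices contribute an even amount to $\mathrm{length}(\gamma)$, so the parity count above still yields $[E]\cdot[\gamma]\equiv\mathrm{length}(\gamma)\pmod 2$ and the argument goes through.
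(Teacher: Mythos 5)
Your argument is correct, and it takes a genuinely different route from the paper's. The paper reformulates the statement as a proper $2$-coloring of the dual graph $\text{Dual}(X)$, which exists precisely when every dual cycle has even length; it then uses Corollary~\ref{cor_loopbasis} together with Lemma~\ref{lem_difference of homotopic loop} to decompose an arbitrary dual cycle into the dual cycles running parallel to the boundary geodesic loops plus dual $4$-gons, and checks the parity of each piece. You instead phrase the obstruction homologically: the bipartition of faces exists iff the $1$-skeleton $E$ bounds mod $2$, and you kill $[E]\in H_1(X;\mathbb{Z}/2)$ by pairing it against the generating boundary geodesic loops and invoking nondegeneracy of the mod $2$ intersection form. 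Both proofs rest on the same two inputs (Corollary~\ref{cor_loopbasis} and the even-length hypothesis), but yours replaces the paper's cycle decomposition by the single computation $[E]\cdot[\gamma]\equiv\mathrm{length}(\gamma)\pmod{2}$, which is cleaner, handles self-intersecting boundary geodesic loops explicitly (the paper is silent on these), and makes visible the fact that the boundary geodesic loops partition the edge set. Your key combinatorial claim --- each edge lies in a unique boundary geodesic loop, traversed exactly once --- does hold: the straight continuation through a degree-$4$ vertex is deterministic in both directions, and a closed local geodesic cannot traverse a segment in both directions. Two small bookkeeping points are worth recording: Corollary~\ref{cor_loopbasis} is stated for integral homology, so you should note that generation passes to $\mathbb{Z}/2$ coefficients because $H_1$ of a closed orientable surface is free; and orientability of $X$, which you need both for $[\gamma]\cdot[\gamma]=0$ and for Poincar\'e duality in the form you use, is a standing assumption of the paper (it appears in Theorem~\ref{thm_genloopbasis}) even though the statement of the lemma omits the word.
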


\begin{proof}
    For a given tessellated surface $X$, the dual graph $\text{Dual}(X)$ can be drawn on $X$. Note that this can be interpreted as a tessellation of $X$ by 4-gons, as each vertex of $X$ is connected with four edges. Each 4-gon in $\text{Dual}(X)$ corresponds to a vertex of $X$ it contains.

    An orientation can now be assigned to each face of $X$ through the following process: First, choose any face arbitrarily (denote this face as $F_0$) and assign it a counterclockwise orientation. Then, for any other face $F_1$, select a path on $\text{Dual}(X)$ that connects the vertices corresponding to $F_0$ and $F_1$. If the path has an even length, assign a counterclockwise orientation to $F_1$; if the path has an odd length, assign a clockwise orientation to $F_1$.

    This method of assigning orientations satisfies the given condition that any two adjacent $p$-gons have opposite orientations since adjacent faces are connected by a single edge in $\text{Dual}(X)$. The only remaining thing to prove is that this method of assigning orientations is well-defined—that is, any path connecting the same two vertices has the same length mod 2. This is equivalent to proving that every cycle has an even length, so it suffices to show that each cycle in $\text{Dual}(X)$ has an even length.

    Let $X^{*}$ be a 1-dimensional simplicial complex corresponding to $\text{Dual}(X)$, which can be considered as an embedding into $X$. Let $\imath: X^{*}\hookrightarrow X$ denote the inclusion map of $X^{*}$ into $X$, and let $\imath_{*}:H_1(X^{*})\rightarrow H_1(X)$ be the group homomorphism induced by $\imath$. 
    
    Let $l_1, l_2, \cdots, l_k$ be the boundary geodesic loops in $X$. For each boundary geodesic loop $l_i$, there are two corresponding cycles in the graph $\text{Dual}(X)$ (see the red cycles in Figure \ref{fig_cycle in dualgraph}). Choose one of these two cycles arbitrarily and denote it as $C_i$ for each $i=1, 2, \cdots, k$. Clearly, $C_i$ is homotopic to $l_i$ when viewed as a closed loop on $X$. By Corollary \ref{cor_loopbasis}, the loops $l_1, l_2, \cdots, l_k$ generate the first homology group of $X$, and thus $C_1, C_2, \cdots, C_n$ do as well. 
    
    Therefore, for any loop $l$ on $X^{*}$, $[l]\in H_1(X^{*})$ can be expressed as $[l]=\sum_{i=1}^{n}{k_i [C_i]}$ for some integers $k_i$. It follows from Lemma \ref{lem_difference of homotopic loop} that $l-\sum_{i=1}^{n}{k_i C_i}\in \ker (\imath)$, as $X^{*}$ can be regarded as a 4-gon tessellation on $X$. Thus, $l$ can be written as the sum of $\sum_{i=1}^{n}{k_i C_i}$ and several 4-gons. Since each $l_i$ has an even length, it follows that $C_i$ also has an even length, and thus every cycle in $\text{Dual}(X)$ has an even length.
    \begin{figure}
        \centering
        \includegraphics[width=0.5\linewidth]{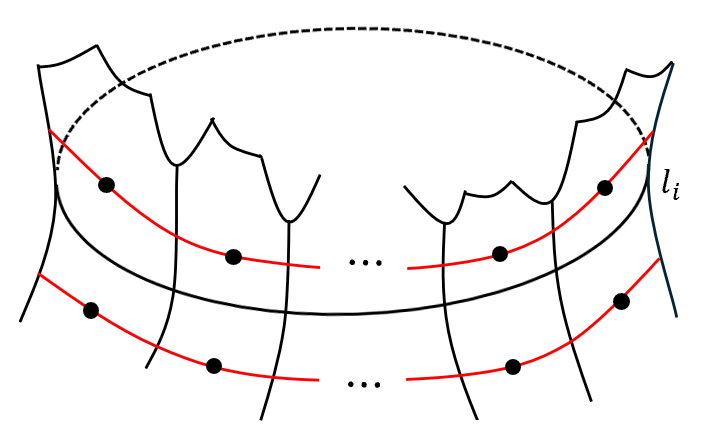}
        \caption{The red cycles are the cycles on the dual graph corresponding to the boundary geodesic loop $l_i$. This figure is drawn for the case $p=6$, but note that the specific value of $p$ is not crucial for this figure.}
        \label{fig_cycle in dualgraph}
    \end{figure}
\end{proof}

Proposition \ref{prop_mainthm} will be proven using the fact that if $X$ satisfies a condition of Corollary \ref{cor_loopbasis} (i.e. the intersection of any two boundary geodesic loops is either empty or consists of a single point), then for every boundary loop in $X$, there exists a composition of several boundary geodesic loops that is homotopic to $X$. This fact is also used in the proof of Lemma \ref{lem_orientation}.

The following outline summarizes the main steps of the proof of Proposition \ref{prop_mainthm}: 
First, we color the edges of the given tessellated complex using two colors, 0 and 1. It is shown that if there exists an edge coloring satisfying certain conditions \ref{item_altercond} and \ref{item_consistcond}, referred to as a \emph{good coloring}, it is possible to assign edge groups and vertex groups that satisfy the conditions \ref{eq4.1}, \ref{eq4.2}, and \ref{eq4.3} in Proposition \ref{prop_genFT}, based on the color of the edges. A canonical method for generating a good coloring is proposed, where colors are assigned along a path, and this coloring is well-defined if this method is independent of the choice of path. Through a slightly technical approach, this path independence is reduced to a statement about the boundary loop. An inductive process on the boundary loop is used to complete the proof; starting with the boundary geodesic loop, the result is extended to any boundary loop by proving that the desired property is preserved under loop composition.

\begin{prop}\label{prop_mainthm}
    Given an even integer $p$ and an integer $g$ such that $\frac{8(g-1)}{p-4}$ is an integer, and an alternating non-coprime sequence $\mathbf{q}=\{ q_1 , q_2 , \cdots , q_p \}$, consider a compact orientable surface $S_g$ with genus $g$. If there exists a tessellation of $S_g$ composed of right-angled $p$-gons such that every boundary geodesic loop has an even length and the intersection of any two boundary geodesic loops is either empty or consists of a single point, then a surface quotient lattice $\Gamma_{p, \mathbf{q}, g}$ exists.
\end{prop}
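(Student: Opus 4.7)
The plan is to build a complex of groups $G(S_g)$ over the given tessellated surface whose vertex links are the bipartite graphs $K_{q_i,q_{i+1}}$ demanded by Proposition \ref{prop_genFT}. If such a $G(S_g)$ can be assembled, then because the complex is nonpositively curved (the chambers are right-angled hyperbolic $p$-gons), Theorem \ref{thm_nonpositive developable} guarantees developability, the universal cover is $I_{p,\mathbf{q}}$ by the local-to-global uniqueness of the right-angled Fuchsian building, and $\pi_{1}(G(S_g))$ is the required surface quotient lattice $\Gamma_{p,\mathbf{q},g}$.

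The alternating non-coprime hypothesis gives two integers $d,e \geq 2$ with $d \mid q_i$ for $i$ odd and $e \mid q_i$ for $i$ even (after a cyclic relabeling). In particular, the indices $1,2,\ldots,p$ around any chamber alternate in parity, so the global data I actually need is a coloring of the edges of $S_g$ by $\{0,1\}$ that alternates around every face, together with an orientation of each face so that the cyclic order of colors and orientation jointly pin down the labels $1,\ldots,p$ modulo rotation. The orientation is supplied by Lemma \ref{lem_orientation}, which applies because every boundary geodesic loop has even length by hypothesis. I will call an edge $\{0,1\}$-coloring compatible with this orientation a \emph{good coloring}. From a good coloring I can then read off the edge subgroups $E_j$ and face subgroups $F_k$ as products of cyclic factors of orders dividing $d$ or $e$, chosen so that the conditions \eqref{eq4.1}, \eqref{eq4.2}, \eqref{eq4.3} of Proposition \ref{prop_genFT} are satisfied edge by edge; this last assignment is an essentially local computation inside $V = \mathbb{Z}/q_i \times \mathbb{Z}/q_{i+1}$ and does not depend on the global topology.

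The existence of a good coloring is the core technical step. I will construct it canonically by fixing a base edge of color $0$ and propagating: for any edge path in the $1$-skeleton of $S_g$, flip the color whenever the path crosses from one edge of a face to an adjacent edge of the same face. Well-definedness amounts to showing that the total number of flips along every closed edge path is even, equivalently that a certain $\mathbb{Z}/2$-valued cochain vanishes on $H_1(S_g;\mathbb{Z}/2)$. By Corollary \ref{cor_loopbasis}, the boundary geodesic loops generate $H_1(S_g)$; by Lemma \ref{lem_difference of homotopic loop}, every boundary loop differs from a sum of boundary geodesic loops by polygonal loops; polygonal loops have length $p$, which is even; and each boundary geodesic loop has even length by hypothesis. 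Inducting on the number of polygonal loops used in the decomposition, one checks that the flip-parity is additive under composition of loops and vanishes on both types of generators, hence on every closed path.

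The step I expect to be the main obstacle is the bookkeeping in this reduction: I must verify that the flip-parity defined from the edge-path procedure really does correspond to the $\mathbb{Z}/2$-homology class of the loop, and that the induction step compatibly handles composition of loops that may share vertices, so that no cancellation or overcounting occurs. Once this is established the good coloring exists, the subgroup assignments of Proposition \ref{prop_genFT} can be made locally and coherently, and the resulting complex of groups $G(S_g)$ produces the lattice $\Gamma_{p,\mathbf{q},g}$ as claimed.
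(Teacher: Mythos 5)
Your overall skeleton matches the paper's proof: propagate a $\{0,1\}$-coloring of edges from a base edge, reduce well-definedness to a parity statement about closed edge paths, verify that parity vanishes on polygonal loops (length $p$, even) and on boundary geodesic loops (even by hypothesis), conclude for all loops via Corollary \ref{cor_loopbasis} and Lemma \ref{lem_difference of homotopic loop}, then convert the coloring into a complex of groups whose local developments satisfy Proposition \ref{prop_genFT} and invoke developability. However, the two places where the hypotheses of the Proposition actually do their work are misstated or omitted, and as written the argument does not close.

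First, the coloring condition you impose is the wrong one. The alternation needed is along each boundary geodesic loop, not ``around every face,'' and your propagation rule (flip when passing between two adjacent edges of the same face) makes colors constant, not alternating, along a geodesic. The point of the alternation along geodesics is condition \eqref{eq4.3}: at a vertex of type $\{i,i+1\}$ the two \emph{collinear} edge groups must have indices summing to $q_i$ (resp.\ $q_{i+1}$), and this is arranged by giving the two collinear edges opposite colors; alternation around a face is automatic from the parity of the type label and would never use the even-length hypothesis. The paper also needs a second, ``consistency'' condition (all edges hanging off a given side of a geodesic get the same color) to make the face groups well defined; you do not address this. Second, the subgroup assignment is not a routine local computation inside $V=\mathbb{Z}/q_i\times\mathbb{Z}/q_{i+1}$: for general $d,e\geq 2$ one cannot find subgroups of that group whose indices are divisors of $|V|$ summing to $q_i$. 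This is exactly where the alternating non-coprime hypothesis enters. The paper instead takes $V=\mathbb{Z}_{d-1}\times\mathbb{Z}_{e-1}\times\mathbb{Z}_{q_i'}\times\mathbb{Z}_{q_{i+1}'}$ (writing $q_i=dq_i'$ for $i$ odd, $q_i=eq_i'$ for $i$ even) and lets the color of an edge decide whether the factor $\mathbb{Z}_{e-1}$ (resp.\ $\mathbb{Z}_{d-1}$) is included in the edge group, so that the two collinear indices are $q_{i+1}'$ and $(e-1)q_{i+1}'$, summing to $q_{i+1}$. Without exhibiting such subgroups, the claim that \eqref{eq4.1}--\eqref{eq4.3} ``can be satisfied edge by edge'' is unsupported, and the theorem's dependence on $\mathbf{q}$ being alternating non-coprime is never used.
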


\begin{proof}
    Before starting the proof, let $\mathbf{q}=\{ q_1 , q_2 , \cdots , q_p \}$, with $d\geq 2$ being the greatest common divisor of $q_1 , q_3 , \cdots , q_{p-1}$, and $e\geq 2$ being the greatest common divisor of $q_2 , q_4 , \cdots , q_p$. Set $q_i=d{q_i}'$ for odd indices $i$, and $q_i=e{q_i}'$ for even indices $i$.

    Let $T(S_g)$ be a polygonal complex obtained from a $p$-gon tessellation of $S_g$. Color the edges of $T(S_g)$ using the colors 0 and 1, denoted by
    \begin{center}
        $c: E(T(S_g))\rightarrow \{ 0, 1 \}$.
    \end{center}

    If it is possible to color all edges while satisfying the following conditions, this coloring is referred to as a \textit{good coloring}.
    \begin{enumerate}
        \item\label{item_altercond} \textit{Alternating condition}: The edges included in each boundary geodesic loop must be assigned 0 and 1 alternately.
        \item\label{item_consistcond} \textit{Consistency condition}: For a boundary geodesic loop $l$ with a given orientation, all colors of the edges connected to $l$ and placed on the right(respectively, left) side of $l$ must be same.
    \end{enumerate}

    Assume that a good coloring is assigned to the edges of $T(S_g)$. Then, assign a group to each edge based on the color assigned to that edge. Let $e$ be a type $i$ edge on $T(S_g)$. If $i$ is odd, assign $\mathbb{Z}_{d-1}\times {\mathbb{Z}_{e-1}}^{c(e)}\times\mathbb{Z}_{{q_i}'}$. If $i$ is even, assign ${\mathbb{Z}_{d-1}}^{c(e)}\times \mathbb{Z}_{e-1}\times\mathbb{Z}_{{q_i}'}$. Then, assign $\mathbb{Z}_{d-1}\times\mathbb{Z}_{e-1}\times\mathbb{Z}_{{q_i}'}\times\mathbb{Z}_{{q_{i+1}}'}$ to the vertex $\sigma$, which is located on the intersection of type $i$ geodesic and type $i+1$ geodesic. Now, we need to assign a face group to each face of $T(S_g)$ to obtain a complex of groups. For a face $f$ on $T(S_g)$, select two adjacent edges $e_1$ and $e_2$ on the boundary of $f$, which are assigned groups $E_1$ and $E_2$, respectively, and assign to $f$ the intersection $\iota E_1\cap \iota E_2$ of the images of $E_1$, $E_2$ in the vertex group. This assignment is independent of the choice of $e_1$ and $e_2$ because of the consistency condition \ref{item_consistcond}. Furthermore, for any vertex $\sigma$ on $T(S_g)$, the local group near $\sigma$ satisfies the conditions of Proposition \ref{prop_genFT} because of the alternating condition \ref{item_altercond} (See Figure \ref{fig_localconst}). Thus, the complex of groups on $T(S_g)$ has $I_{p, \mathbf{q}}$ as its universal cover.
    
    \begin{figure}
        \centering
        \includegraphics[width=0.4\textwidth]{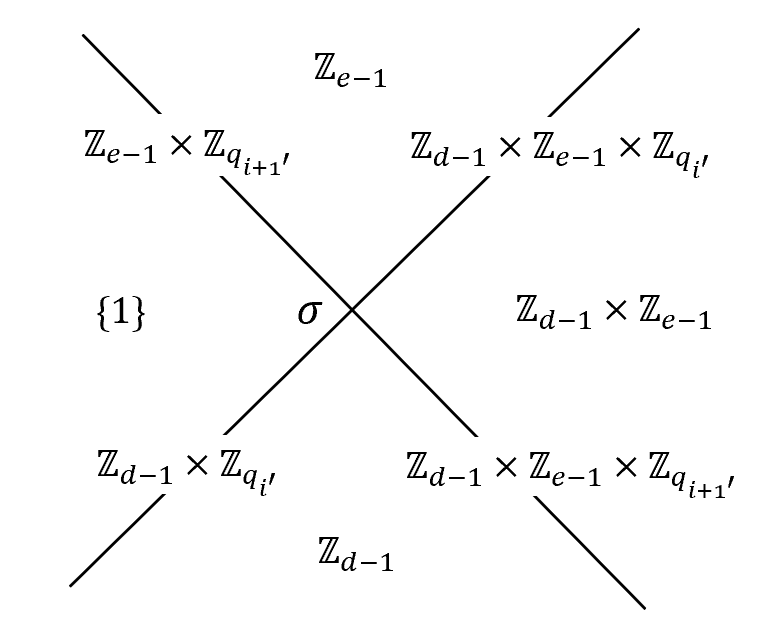}
        \caption{edge and face groups near $\sigma$}
        \label{fig_localconst}
    \end{figure}
    
    Now, it remains to prove the existence of a \textit{good coloring} if every boundary loop on $T(S_g)$ has an even length. Select an arbitrary vertex $\sigma_0$ on $T(S_g)$ and define it as the \textit{base point}. Denote the four edges connected to $\sigma_0$ as $e_1$, $e_2$, $e_3$, and $e_4$ in counterclockwise order. (In other words, $e_1$ and $e_3$ share the same geodesic type, and $e_2$ and $e_4$ share the same geodesic type.) Define the values of the coloring function $c$ for these four edges as $c(e_1)=0$, $c(e_2)=0$, $c(e_3)=1$, and $c(e_4)=1$. Then, the remaining edges on $T(S_g)$ can be colored as follows: for another vertex $\sigma$ on $T(S_g)$, it is possible to select a path composed of edges of $T(S_g)$ starting from $\sigma_0$ and ending at $\sigma$. Then the color $c(e)$ can be uniquely determined so that the alternating condition \ref{item_altercond} and the consistency condition \ref{item_consistcond} are satisfied.
    
    We need to show that $c(e)$ is independent of the choice of path connecting $\sigma_0$ and $\sigma$. For directed path $l_1$ and $l_2$, we define functions $\phi : \{0, 1\}\times \{0, 1\} \rightarrow \{0, 1\}\times \{0, 1\}$ and $\psi : \{0, 1\}\times \{0, 1\} \rightarrow \{0, 1\}\times \{0, 1\}$. As shown as Figure \ref{fig_comploop}, let $a$ be a color of edge on $l_1$ connected to $\sigma_0$ and $b$ be a color of edge connected to $\sigma_0$ such that $b$ is left on $a$. Then, $\phi_1(a, b)$ and $\phi_2(a, b)$ are defined as the colors of the edges connected to $\sigma$ determined along $l_1$, while satisfying the alternating condition \ref{item_altercond} and consistency condition \ref{item_consistcond}. This function $\phi(a, b)$ is defined as $\phi(a, b)=(\phi_1(a, b), \phi_2(a, b))$. For $a \in \{0, 1\}$, define $\bar{a}$ as
    \begin{equation}
        \bar{a}=\begin{cases}
            1 & \text{a=0} \\ 0 & \text{a=1}
        \end{cases}.
    \end{equation}
    Then, $\psi(\bar{a}, \bar{b})=(\psi_1(\bar{a}, \bar{b}), \psi_2(\bar{a}, \bar{b}))$ can be similarly defined along the path $l_2$ (See Figure \ref{fig_comploop}). To prove the color of edges connected to $\sigma$ determined independently of the choice of path, it is enough to show that $\psi^{-1}(\phi(a, b))=(a, b)$ for any $(a, b)\in \{0, 1\} \times \{0, 1\}$.
    \begin{figure}
        \centering
        \includegraphics[width=0.5\textwidth]{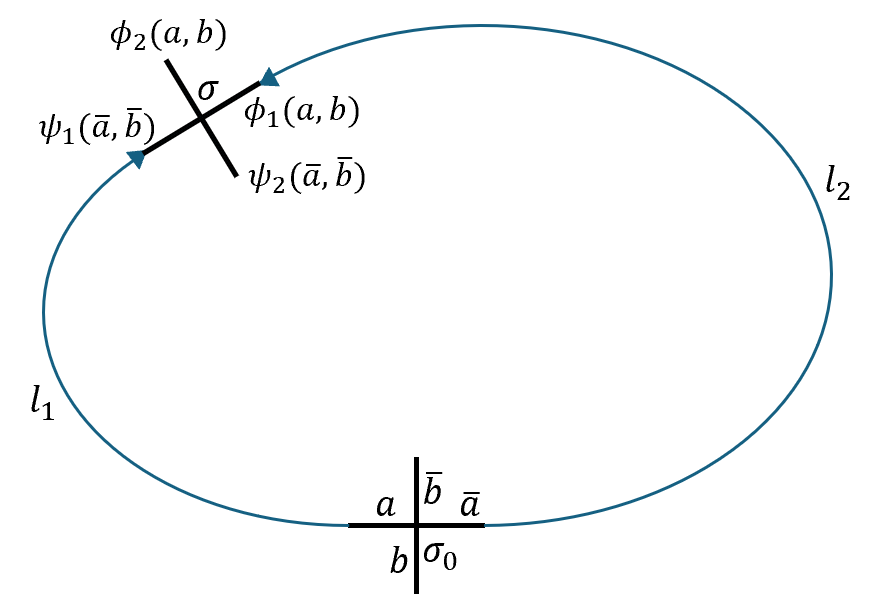}
        \caption{definition of $\phi$ and $\psi$}
        \label{fig_comploop}
    \end{figure}
    
    For any path $C$, let $C^{-1}$ denote the path obtained by reversing the orientation of $C$. For two paths $C_1$ and $C_2$ where the endpoint of $C_1$ is the initial point of $C_2$, denote $C_1 \# C_2$ as the concatenation of the two paths $C_1$ and $C_2$, sharing the initial point with $C_1$ and the endpoint with $C_2$. For two paths $l_1$ and $l_2$ as shown in Figure \ref{fig_comploop}, let $l=l_1 \# {l_2}^{-1}$. Then $l$ forms a loop containing the base point $\sigma_0$. Define $\Phi_l : \{0, 1\} \times \{0, 1\} \rightarrow\{0, 1\} \times \{0, 1\}$ as $\psi^{-1}\circ\phi$ for $\psi$ and $\phi$ defined by $l_1$ and $l_2$. Then it is enough to show that for every loop $l$ pass $\sigma_0$, $\Phi_l$ is the identity map. 

    We start the proof by verifying that $\Phi_l$ is an identity map for the case where $l$ is a polygonal loop (Definition \ref{def_bbl}) or a boundary geodesic loop (Definition \ref{def_bgl}). If $l$ is a polygonal loop, $\Phi_l$ is an identity map since $l$ has length $p$ and $p$ is even. Otherwise, if $l$ is a boundary geodesic loop, it follows that $\Phi_l$ is a boundary loop because of the alternating condition \ref{item_altercond} and the assumption that every boundary geodesic loop has an even length.

    Because of Theorem \ref{thm_genloopbasis}, for any nontrivial boundary loop $l$ on $T(S_g)$ containing $\sigma_0$, there exists a loop $l'$ on $T(S_g)$ that is homotopic to $l$ and can be obtained by composing the boundary geodesic loops. The boundary loop $l'$ can be expressed as $\sum{k_i l_i}$ for positive integer $k_i$ and boundary geodesic loop $l_i$. Then it follows that $l$ can be expressed as a sum of boundary geodesic loops $\sum{k_i l_i}$ plus some polygonal loops from Lemma \ref{lem_difference of homotopic loop}. If $l$ is trivial, the fact that a trivial closed loop on $S_g$ bounds a closed disc on $S_g$ \cite{marden1966regions} implies that $l$ is a sum of some closed boundary loops.
\end{proof}

\begin{corollary}\label{cor_exist_1}
    Let $p$ be an even integer and $g$ be an integer such that $F=\displaystyle\frac{8(g-1)}{p-4}$ is an integer divisible by 4. Then, for every alternating non-coprime sequence $\mathbf{q}=\{q_1, q_2, \cdots, q_p\}$, a lattice $\Gamma_{p, \mathbf{q}, g}$ exists.
\end{corollary}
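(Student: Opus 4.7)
The plan is to apply Proposition \ref{prop_mainthm} directly. That proposition produces a lattice $\Gamma_{p, \mathbf{q}, g}$ whenever $p$ is even, $F$ is an integer, $\mathbf{q}$ is alternating non-coprime, and $S_g$ admits a $p$-gon tessellation in which every boundary geodesic loop has even length and any two distinct boundary geodesic loops meet in at most one point. The first three conditions are exactly the hypotheses of the corollary, so it suffices to produce a $p$-gon tessellation of $S_g$ enjoying these two geometric properties. For this I would invoke the Futer--Thomas block construction recalled in Section \ref{subsec2.2} for the case $4 \mid F$ and $p$ even, which assembles $S_g$ as a circular chain of $F/4$ blocks, each built from four right-angled regular $p$-gons into a genus-$(p-4)/2$ subsurface with two boundary circles.

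The key preliminary observation is that any boundary geodesic loop in this tessellation consists entirely of edges of a single type: at a vertex of type $\{i, i+1\}$ the two type-$i$ edges are opposite to each other, so a geodesic entering along a type-$i$ edge must continue along a type-$i$ edge. With this in hand, I would verify the two geometric conditions. For the even-length condition, I trace each type-$i$ geodesic through the block chain and exploit the fact that the symmetric four-polygon gluing inside each block pairs type-$i$ edges so that any type-$i$ geodesic contributes an even number of edges per block it visits, a property preserved by the circular concatenation of $F/4$ blocks. For the single-intersection condition, two distinct type-$i$ geodesics cannot share a vertex (since the two type-$i$ edges at any vertex determine a unique type-$i$ geodesic there), while a type-$i$ and a type-$j$ geodesic can meet only at a vertex of type $\{i,j\}$, forcing $j \equiv i \pm 1 \pmod p$, and the block chain is arranged so that any such pair of geodesics shares at most one common vertex.

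The main obstacle is the explicit combinatorial bookkeeping in these two verifications, in particular tracking each type-$i$ geodesic as it crosses the block boundaries and confirming that neither the parity nor the single-intersection condition is violated at the circular closure of the chain. The decisive ingredients will be the evenness of $p$, which governs the local vertex structure of every boundary geodesic, together with the divisibility $4 \mid F$, which is precisely what permits the symmetric circular block chain to close up without parity defects. Once these verifications are in place, Proposition \ref{prop_mainthm} immediately produces the desired lattice $\Gamma_{p, \mathbf{q}, g}$.
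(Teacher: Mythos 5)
Your proposal is correct and follows essentially the same route as the paper: the paper's proof simply observes that in the Futer--Thomas block tessellation for $4 \mid F$ (left side of Figure \ref{fig: tessellation when F is divisible by 4}) every boundary geodesic loop has even length and any two such loops meet in at most one point, and then invokes Proposition \ref{prop_mainthm}. Your additional observations about geodesics being of a single edge type and the per-block parity count are a more explicit version of what the paper leaves to the figure.
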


\begin{proof}
    In the tessellation presented on the left side of Figure \ref{fig: tessellation when F is divisible by 4}, every boundary geodesic loop has an even length. Additionally, there are no pairs of boundary geodesic loops whose intersection contains more than two vertices. The result follows directly from Proposition \ref{prop_mainthm}.
\end{proof}

We also obtain $\Gamma_{p, \mathbf{q}, g}$ in the case where $F$ is not divisible by 4 by dividing the $p$-gon into 2 or 4 pieces. When the $p$-gon is divided along its symmetry axis, the condition of Proposition \ref{thm_genloopbasis} is still satisfied. Since the proof of Proposition \ref{prop_mainthm} does not depend on metric information, it remains applicable if the conditions of Corollary \ref{cor_loopbasis} (boundary geodesic loops generate the basis of the first homology group) and the condition regarding the length of boundary loop (every boundary geodesic loop has an even length) are satisfied when each $p$-gon is divided into smaller pieces.

\begin{corollary}\label{cor_exist_2}
    Let $p$ be an even integer and $g$ be an integer such that $F=\displaystyle\frac{8(g-1)}{p-4}$ is an even integer, however, not divisible by 4. Then, if length $p$ sequence $\mathbf{q}$ is 2-symmetric by $m$ and gcd of $q_{m-1}, q_{m+1}, \cdots, q_{m+p-1}$ is even, a lattice $\Gamma_{p, \mathbf{q}, g}$ exists.
\end{corollary}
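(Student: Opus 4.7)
The plan is to reduce to the framework of Proposition \ref{prop_mainthm} by bisecting each $p$-gon along its $2$-symmetry axis, so that the refined complex satisfies the two combinatorial conditions required by that proposition.

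Concretely, I would begin with the Futer-Thomas tessellation of $S_g$ by $F$ right-angled $p$-gons in the case when $F$ is even but not divisible by $4$ (the rectangular-gluing construction shown in Figure \ref{fig: tessellation when F is not divisible by 4}), and label the chambers consistently so that edges of type $m$ agree across shared boundaries. Then I would cut each $p$-gon along the geodesic joining the midpoint of edge $m$ to the midpoint of edge $m+p/2$; because the labels agree, the new ``axis edges'' introduced by the cuts meet at the common midpoints on the shared edges, producing a refined tessellation with $2F$ chambers (each a $(p/2+2)$-gon) and $4$-valent new vertices at the midpoints.

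Next I would verify the two hypotheses of Proposition \ref{prop_mainthm} for the refined complex, which is justified by the remark preceding the corollary, since the proof of Proposition \ref{prop_mainthm} is combinatorial and uses no regularity assumption. The intersection condition of Corollary \ref{cor_loopbasis} is preserved because each axis edge is an internal geodesic segment of a single original $p$-gon and therefore meets any other boundary geodesic loop only at its two endpoints. For the parity condition, every boundary geodesic loop of the refined complex falls into one of three classes: (a) a loop of type distinct from $m$ and $m+p/2$ that is unchanged from the original tessellation; by orienting the Futer-Thomas tessellation appropriately one can arrange that its odd-length loops are exactly those of types $m$ and $m+p/2$; (b) a loop of type $m$ or $m+p/2$ of the original tessellation, whose length doubles under the bisection and hence becomes even; or (c) a new transverse loop running along axis edges through an even number of $p$-gons, forced by $F$ being even.

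Finally, with these conditions verified, the coloring and local-group construction in the proof of Proposition \ref{prop_mainthm} produces a complex of groups on the refined tessellation whose universal cover is $I_{p,\mathbf{q}}$. The $2$-symmetry $q_{m+i}=q_{m-i}$ guarantees that the edge-group and vertex-group assignments on the two halves of each original $p$-gon are compatible across the axis, while the hypothesis that $\gcd(q_{m-1}, q_{m+1}, \ldots, q_{m+p-1})$ is even is used precisely to assign each axis edge an edge group of index $2$ inside the adjacent vertex group, so that the index relation $|V:E_1|+|V:E_3|=q_i$ of Proposition \ref{prop_genFT} is satisfied at the new midpoint vertices. The main obstacle is the parity analysis of boundary geodesic loops after bisection, especially showing that the transverse axis loops close up with even length and that the orientation of the Futer-Thomas construction can be chosen to make the non-axis boundary geodesic loops even; once this is settled, the existence of $\Gamma_{p,\mathbf{q},g}$ follows from Proposition \ref{prop_mainthm}.
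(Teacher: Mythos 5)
Your proposal is correct and follows essentially the same route as the paper: bisect each $p$-gon along the $2$-symmetry axis to obtain $2F$ right-angled $(p/2+2)$-gons, check that the refined tessellation satisfies the intersection and even-length hypotheses of Proposition \ref{prop_mainthm}, give the new axis edges thickness $2$ (which is where the evenness of $\gcd(q_{m-1},q_{m+1},\dots,q_{m+p-1})$ enters), and use the $2$-symmetry to identify the universal cover of the resulting complex of groups with $I_{p,\mathbf{q}}$. Your explicit attention to the parity of the new transverse axis loops is a point the paper's proof passes over in silence, but the argument is otherwise the same.
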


\begin{proof}
    From the tessellation method presented in Figure \ref{fig: tessellation when F is not divisible by 4}, there are only two boundary loops that have an odd length. Divide each $p$-gon into two pieces as shown in Figure \ref{fig: divide 2 piece}. Then, consider the resulted tessellation as comprised of 2$F$ faces. Each face is a right-angled polygon with $p'=\frac{p+4}{2}$ edges, and the lengths of the edges do not need to coincide. (In Figure \ref{fig: divide 2 piece}, edge $PQ$ has a different length from the others contained in the original $p$-gon.) However, the resulting tessellation includes every boundary geodesic loop from the original tessellation, so it satisfies the condition of Proposition \ref{thm_genloopbasis} automatically. Additionally, every boundary geodesic loop has an even length since the new edge divides the edges (red edges in Figure \ref{fig: divide 2 piece}) contained in the boundary geodesic loop with odd length into two pieces.  
    \begin{figure}
        \centering
        \includegraphics[width=0.6\linewidth]{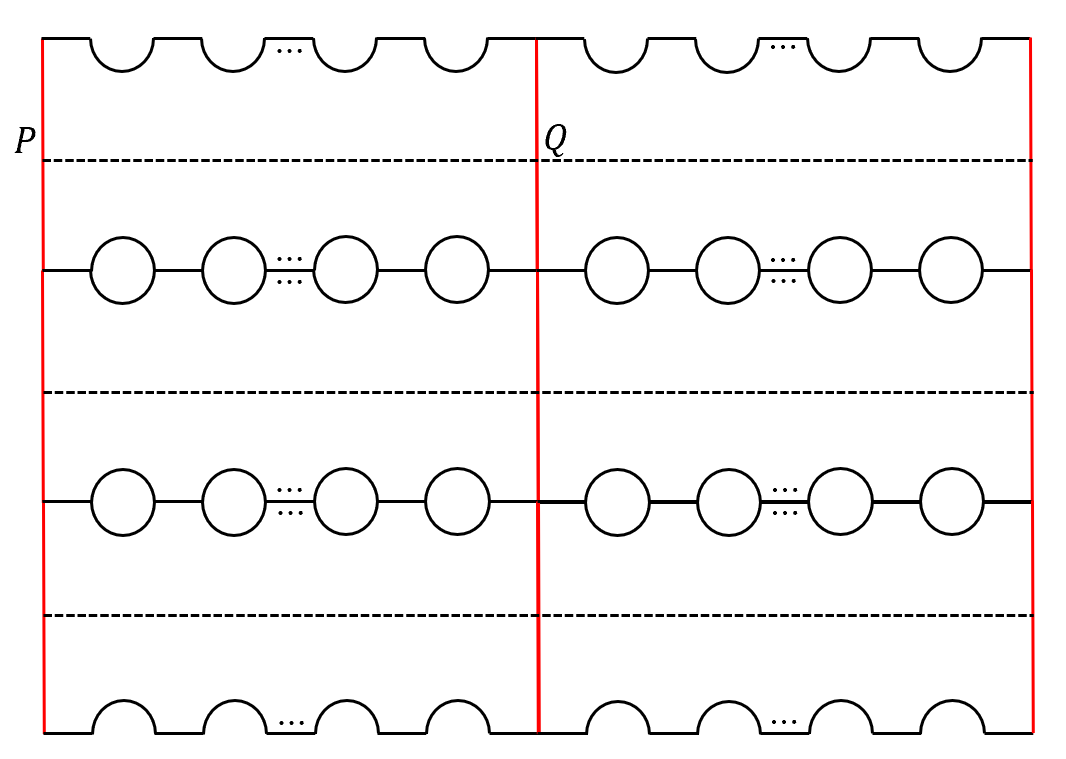}
        \caption{$F=6$ case: number of the row of $p$-gon is $\frac{F}{2}$}
        \label{fig: divide 2 piece}
    \end{figure}
    This means that the resulting tessellation by $2F$ $p'$-gons satisfies three conditions required for Proposition \ref{prop_mainthm}: first, the boundary loop must generate the first homology group of the surface, and second, every boundary loop must consist of an even number of edges, and third, the intersection of any two boundary geodesic loops is either empty or consists a single point (this can be easily verified from Figure \ref{fig: divide 2 piece}). Since the exact length of the loop does not affect the proof of Proposition \ref{prop_mainthm}, a lattice $\Gamma_{p', \mathbf{q'}, g}$ exists for any alternating non-coprime sequence $\mathbf{q'}$ with length $p'$.
    
    Without loss of generality, let $\mathbf{q}=\{q_1, q_2, \cdots, q_p\}$ be 2-symmetric by $m=1$. By assumption, $q_2, q_4, \cdots, q_p$, which are the elements of $\mathbf{q}$ with even indices are all even. Let the length $p'$ sequence $\mathbf{q'}$ be $\mathbf{q'}=\{q_1, q_2, \cdots, q_{\frac{p}{2}+1}, 2\}$. Then $\mathbf{q'}$ is an alternating non-coprime sequence of length $p'$. By Proposition \ref{prop_mainthm}, the resulting tessellation of $S_g$ by $p'$-gons is a quotient of the building that is diffeomorphic to $I_{p', \mathbf{q'}}$. However, it follows that $I_{p', \mathbf{q'}}$ is diffeomorphic to $_{p, \mathbf{q}}$ since ${q'}_{p'}=2$ and  $\mathbf{q}$ is 2-symmetric. Thus, there exists a lattice $\Gamma_{p, \mathbf{q}, g}$ which is corresponding to the quotient of $I_{p', \mathbf{q'}, g}$.
\end{proof}

\begin{corollary}\label{cor_exist_3}
    Let $p$ be an even integer and $g$ be an integer such that $F=\displaystyle\frac{8(g-1)}{p-4}$ is an odd composite integer. Then, if the length $p$ sequence $\mathbf{q}$ is 4-symmetric by $m$ and the two gcd's in the definition of an alternating non-coprime sequence(Definition \ref{def_altseq}) are even, a lattice $\Gamma_{p, \mathbf{q}, g}$ exists.
\end{corollary}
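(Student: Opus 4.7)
The plan is to mimic the strategy of Corollary \ref{cor_exist_2} but with a four-fold subdivision of each $p$-gon. Since $F$ is odd composite, I would start from the tessellation of $S_g$ described in Figure \ref{fig: tessellation when F is not divisible by 4}; in this tessellation several boundary geodesic loops have odd length, so Proposition \ref{prop_mainthm} cannot be invoked directly on the original $p$-gon decomposition. Because $\mathbf{q}$ is 4-symmetric the integer $p$ is divisible by $4$, and because $F$ is odd a brief $2$-adic count of $F = 8(g-1)/(p-4)$ yields $p \equiv 4 \pmod{8}$, so that $p/4$ is odd. I would then cut each $p$-gon along its two perpendicular symmetry chords connecting midpoints of opposite edges, dividing it into four congruent right-angled quadrants; each quadrant is a right-angled polygon with $p'' = p/4 + 3$ edges, which is even, keeping the refined tessellation within the scope of Proposition \ref{prop_mainthm}.

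Next I would verify the three hypotheses that drive the proof of Proposition \ref{prop_mainthm} for the refined tessellation: the boundary geodesic loops still generate $H_1(S_g)$ by Corollary \ref{cor_loopbasis}; intersections of any two such loops contain at most one point, as preserved by midpoint subdivision; and, crucially, every boundary geodesic loop has even length. The parity condition is the heart of the argument, since each original odd-length boundary geodesic loop is now interrupted at the midpoints where the new perpendicular chords cross its edges, doubling the count on those segments and flipping parity, while the new boundary geodesic loops that run along the chord midpoints around the $a \times b$ rectangular block of $p$-gons have even lengths $2a$ or $2b$. Assigning the refined tessellation a length-$p''$ thickness sequence $\mathbf{q}''$ whose $p/4+1$ inherited edges reproduce a consecutive segment of $\mathbf{q}$ and whose two chord half-edges are labelled by $2$ produces an alternating non-coprime sequence in the sense of Definition \ref{def_altseq}, using the hypothesis that both alternating gcds of $\mathbf{q}$ are even. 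The 4-symmetry of $\mathbf{q}$ guarantees that the four quadrants around each center re-glue consistently so that the universal cover $I_{p'', \mathbf{q}''}$ of the resulting complex of groups is isomorphic to $I_{p, \mathbf{q}}$, and a lattice $\Gamma_{p'', \mathbf{q}'', g}$ produced by Proposition \ref{prop_mainthm} yields the desired $\Gamma_{p, \mathbf{q}, g}$.

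The main obstacle I expect is the parity verification: one must carefully track how every boundary geodesic loop in the refined tessellation decomposes into segments of original $p$-gon edges and of the newly introduced central chord geodesics, and confirm that in every case the total count is even. This is precisely where both $d$ and $e$ being even is used; if only one of the two alternating gcds were even, certain chord-geodesic plus original-geodesic hybrids would retain odd length, the sequence $\mathbf{q}''$ would fail to be alternating non-coprime, and no \emph{good coloring} in the sense of the proof of Proposition \ref{prop_mainthm} would exist.
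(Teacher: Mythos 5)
Your proposal is correct and follows essentially the same route as the paper's own proof: starting from the Figure \ref{fig: tessellation when F is not divisible by 4} tessellation, cutting each $p$-gon into four quadrants with $p'=p/4+3$ edges (even since $p\equiv 4 \pmod 8$), assigning the two new chord edges thickness $2$, and invoking Proposition \ref{prop_mainthm} together with the $4$-symmetry and the evenness of both alternating gcds to identify the refined building with $I_{p,\mathbf{q}}$. Your extra attention to the parity of the new chord loops is a welcome detail the paper leaves implicit, but it does not change the argument.
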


\begin{proof}
    From the tessellation method presented in Figure \ref{fig: tessellation when F is not divisible by 4}, the boundary loops with odd lengths are generated by the red and blue edges in Figure \ref{fig: divide 4 pieces}. 
    \begin{figure}
        \centering
        \includegraphics[width=0.8\linewidth]{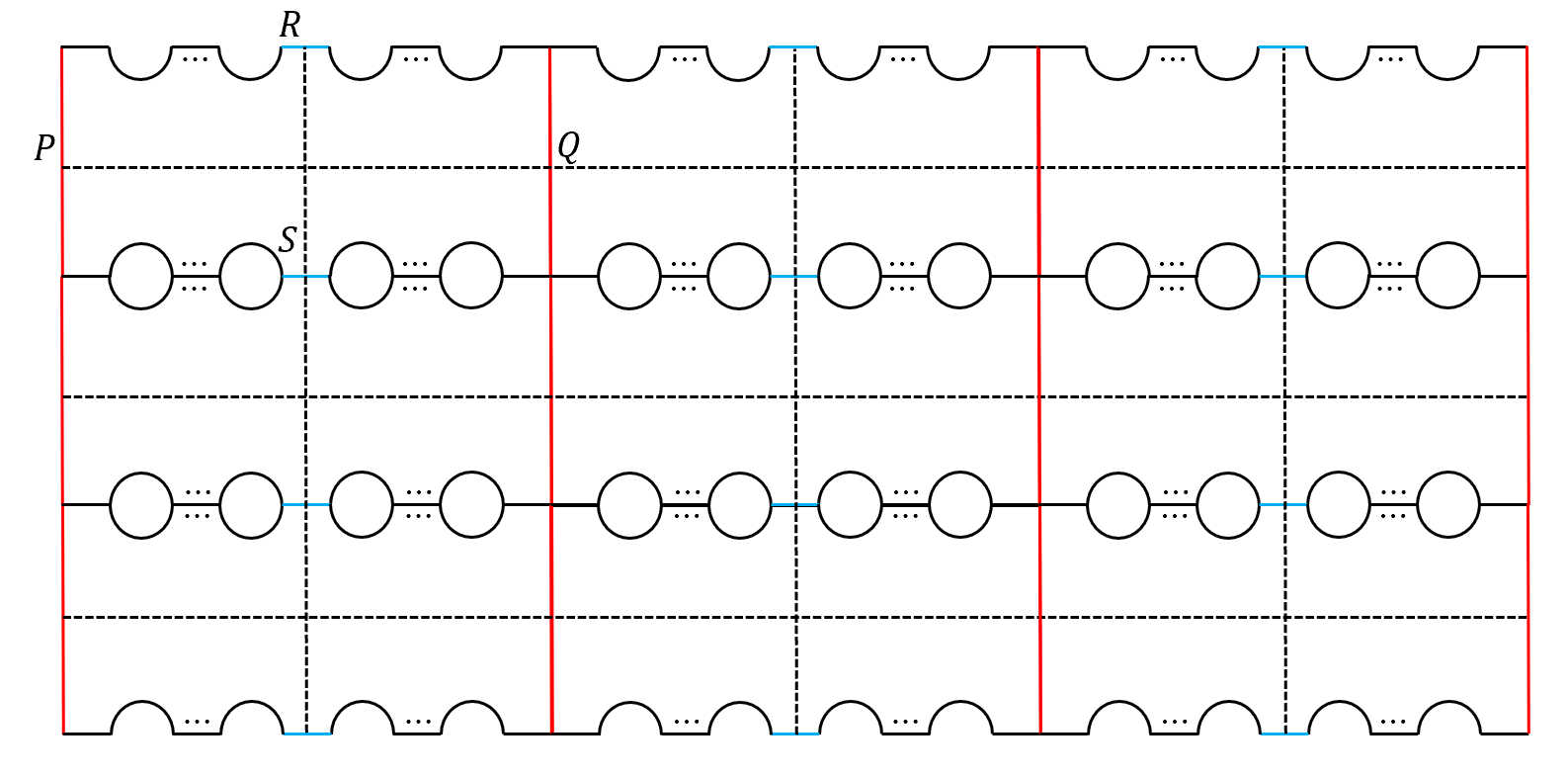}
        \caption{$F$=9 case: number of $p$-gons is $F$}
        \label{fig: divide 4 pieces}
    \end{figure}
    Similarly to the proof of Corollary \ref{cor_exist_2}, divide each $p$-gon into four pieces by the dotted lines in Figure \ref{fig: divide 4 pieces}. Then, consider the resulting tessellation as being comprised of $4F$ faces. Each face is a right-angled polygon with $p'=\frac{p}{4}+3$, and $p'$ is even since $p$ is congruent to 4 modular 8. By adding edges (for example, $PQ$ and $RS$ in Figure \ref{fig: divide 4 pieces}), every odd-length boundary geodesic loop in the original tessellation becomes an even-length loop.
    
    Because of the same reason explained in the proof of Corollary \ref{cor_exist_2}, a lattice $\Gamma_{p', \mathbf{q'}, g}$ exists for any alternating non-coprime sequence $\mathbf{q'}$ of length $p'$. Without loss of generality, let $\mathbf{q}=\{q_1, q_2, \cdots, q_p\}$ be 4-symmetric by $m=1$. By assumption, every $q_i$ is even since gcd of $q_1, q_3, \cdots, q_{p-1}$ and gcd of $q_2, q_4, \cdots, q_p$ both are even. Then, let $\mathbf{q'}=\{q_1, q_2, \cdots, q_{\frac{p}{4}+1}, 2, 2\}$. This $\mathbf{q'}$ is an alternating non-coprime sequence of length $p'$ and $I_{p', \mathbf{q'}}$ be diffeomorphic to $I_{p, \mathbf{q}}$. Thus, $\Gamma_{p', \mathbf{q'}, g}$, which we have proved the existence, is the same as $\Gamma_{p, \mathbf{q}, g}$, which is the group we aim to find.
\end{proof}

\section{Non-existence of $\Gamma_{p, \mathbf{q}, g}$}\label{sec5}
In this section, we find some sufficient conditions for the non-existence of $\Gamma_{p, \mathbf{q}, g}$. Before stating our results, we will first introduce what happens if an element of ${\text{Isom}}^{+}(\mathbb{H})$ identifies two geodesics in $\mathbb{H}$. 

The chambers in $I_{p, \mathbf{q}}$ are divided into two types according to the orientation of edge labels, which we will call \textit{counterclockwise chamber} and \textit{clockwise chamber}. In Figure \ref{fig_classification of chamber}, white faces are counterclockwise chambers and black faces are clockwise chambers.
\begin{figure}
    \centering
    \includegraphics[width=0.5\linewidth]{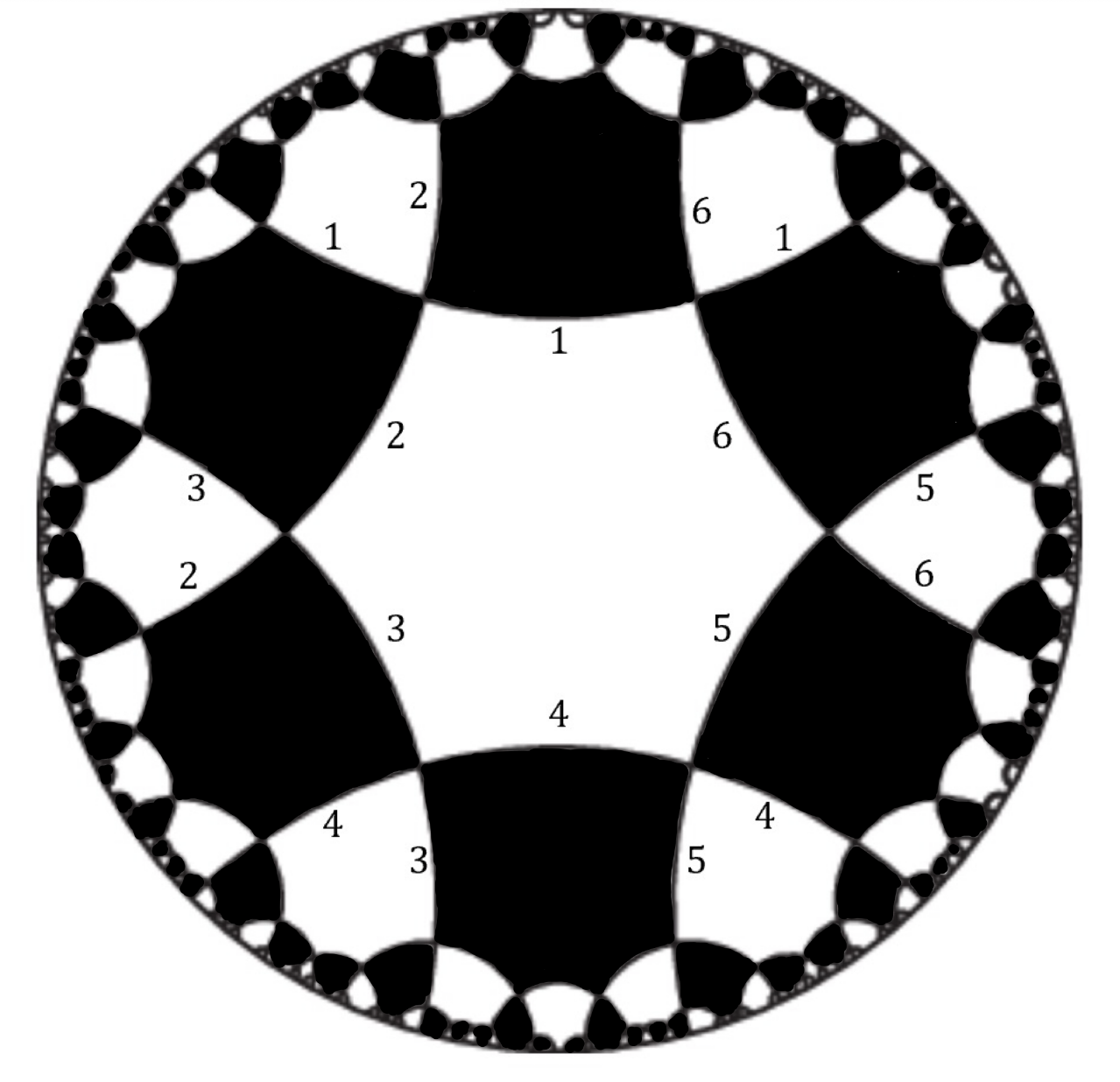}
    \caption{example of $p=6$: white faces are counterclockwise chamber, and black faces are clockwise chamber. It is easy to check that a hexagon adjacent to a counterclockwise chamber is a clockwise chamber.}
    \label{fig_classification of chamber}
\end{figure}

Define $\Gamma$ is to be the subgroup of ${\text{Isom}}^+(\mathbb{H})$ which preserves the tessellation of right-angled $p$-gons. If an isometry in $\Gamma$ sends counterclockwise[clockwise] $p$-gon to counterclockwise[clockwise, resp.] $p$-gon, define it as \textit{color preserving isometry}. Conversely, if the isometry changes the counterclockwise $p$-gons and clockwise $p$-gons, define it as \textit{color reversing isometry}. Consider such element $\gamma$ in $\Gamma$ identifying type $i$ edge to type $j$ edge. If $\gamma$ is color-preserving, $\gamma$ identifies type $i+k$ edge to type $j+k$ edge for every integer $k$. Conversely, if $\gamma$ is color reversing, it identifies the type $i+k$ edge to the type $j-k$ edge.

Denote the set of type $i$ edge in the $p$-gon tessellation of $\mathbb{H}$ as $B_i$. Since an element in $\Gamma$ sends an edge of $p$-gon tessellation to another edge, we can consider $\Gamma$ acting on $B_1 \cup B_2 \cup \cdots \cup B_p$. 

\begin{lemma}\label{lem_orbit in H}
    Let $\Gamma'$ be any subgroup in $\Gamma$. If $\Gamma' \cdot B_i$ contains at least one type $j$ edge, it contains whole $B_j$. 
\end{lemma}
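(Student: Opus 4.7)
The plan is to exploit the structural observation recorded immediately before the lemma, which says that an element $\gamma \in \Gamma$ acts on the $p$ types of edges either by a cyclic shift (color preserving) or by a reflection-shift (color reversing), and that this action on the index set $\{1, \ldots, p\}$ is completely determined by where $\gamma$ sends a single edge. Starting from the hypothesis, I would pick $\gamma \in \Gamma'$ together with an edge $e_0 \in B_i$ such that $\gamma(e_0) \in B_j$, and then argue that in fact $\gamma(B_i) = B_j$ as sets; granting this set equality, the lemma is immediate from the chain $B_j = \gamma(B_i) \subseteq \Gamma' \cdot B_i$.

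The argument for the set equality is a direct specialization of the quoted observation. In the color-preserving case, $\gamma$ sends every type-$(i+k)$ edge to a type-$(i+k+m)$ edge for a fixed constant $m$, and the hypothesis $\gamma(e_0) \in B_j$ forces $m \equiv j-i \pmod{p}$; taking $k=0$ then yields $\gamma(B_i) \subseteq B_j$. In the color-reversing case the same specialization yields $\gamma(B_i) \subseteq B_j$ via the reflection formula $i+k \mapsto j-k$. Applying the identical reasoning to $\gamma^{-1} \in \Gamma'$ (which is also color preserving, respectively color reversing) produces the reverse inclusion $B_j \subseteq \gamma(B_i)$, so $\gamma(B_i) = B_j$ set-theoretically.

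I do not anticipate any substantive obstacle here: the lemma is essentially a direct unpacking of the color-preserving/color-reversing dichotomy established in the paragraph above. The only point at which one must be careful is in verifying that the ``type shift'' $m$ really is a global feature of the isometry $\gamma$ rather than an artifact of the particular edge $e_0$ chosen, and this global well-definedness is exactly what the preceding discussion asserts, so I would cite it rather than re-derive it. The proof therefore consists of one line of set chase once the quoted observation has been applied to both $\gamma$ and $\gamma^{-1}$.
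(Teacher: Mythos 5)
Your proposal is correct and follows essentially the same route as the paper: both invoke the color-preserving/color-reversing dichotomy to conclude that $\gamma$ shifts (or reflects) all edge types uniformly, and both obtain $B_j \subseteq \gamma(B_i)$ by applying the same observation to $\gamma^{-1}$ (the paper phrases this as $\gamma^{-1}(e_2) \in B_i$ for an arbitrary $e_2 \in B_j$). No substantive difference.
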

\begin{proof}
    Suppose that $\gamma$, an element in $\Gamma$, sends type $i$ edge $e_0$ to type $j$ edge $e_1$. Let arbitrary another type $j$ edge as $e_2$. It follows that $\gamma^{-1}(e_2)$ is type $j$ edge from substituting $k=p$ in the above discussion, whether $\gamma$ is color preserving or reversing. Thus, $\Gamma' \cdot B_i$ contains every type $j$ edge, so it contains whole $B_j$.
\end{proof}

However, since this paper focuses on the theory of $I_{p, \mathbf{q}}$, we need to consider $\text{Aut}(I_{p, \mathbf{q}})$ instead of $\Gamma$. We can extend the above discussion to $I_{p, \mathbf{q}}$ based on the fact that any two chambers in $I_{p, \mathbf{q}}$ are contained within exactly one apartment. From now on, $B_i$ is defined as the set of type $i$ edges in $I_{p, \mathbf{q}}$, as the remaining part of this paper deals exclusively with $I_{p, \mathbf{q}}$. As discussed eariler, $\text{Aut}(I_{p, \mathbf{q}})$ can be restricted to act on $B_1 \cup B_2 \cup \cdots \cup B_p$. Consequently, the same result as in Lemma \ref{lem_orbit in H} applies to $I_{p, \mathbf{q}}$.

\begin{lemma}\label{lem_orbit in I_p,q}
    Let $\Tilde{\Gamma}$ be any subgroup of $\emph{Aut}(I_{p, \mathbf{q}})$. If $\Tilde{\Gamma}\cdot B_i$ contains at least one type $j$ edge, then it contains the entire set $B_j$.
\end{lemma}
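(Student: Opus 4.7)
The plan is to leverage the apartment structure of $I_{p, \mathbf{q}}$ to reduce this statement to its apartment-level counterpart, Lemma \ref{lem_orbit in H}. Given $\gamma \in \Tilde{\Gamma}$ with $\gamma(e_0) = e_1$ for some $e_0 \in B_i$ and $e_1 \in B_j$, together with an arbitrary $e_2 \in B_j$, I aim to show that the preimage $\gamma^{-1}(e_2)$ is automatically a type $i$ edge, whence $e_2 = \gamma(\gamma^{-1}(e_2)) \in \gamma \cdot B_i \subseteq \Tilde{\Gamma} \cdot B_i$, which is exactly the desired inclusion $B_j \subseteq \Tilde{\Gamma} \cdot B_i$.

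First, I would invoke the apartment axiom from Definition \ref{def_building} to produce an apartment $A'$ containing two chambers, one incident to $e_1$ and one incident to $e_2$; such an $A'$ then contains both $e_1$ and $e_2$ as type $j$ edges, and its $p$-gon tessellation is isomorphic (as a labeled complex) to the $p$-gon tessellation of $\mathbb{H}^2$. Since building automorphisms send apartments to apartments isomorphically, the preimage $A := \gamma^{-1}(A')$ is also an apartment, and $\gamma^{-1}$ restricts to a label-respecting isomorphism $A' \to A$ that sends the type $j$ edge $e_1$ to the type $i$ edge $e_0$.

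Next, I would apply the elementary observation about color preserving versus color reversing isometries recorded immediately before Lemma \ref{lem_orbit in H}: an isometry of the right-angled $p$-gon tessellation of $\mathbb{H}^2$ that carries one type $i$ edge to a type $j$ edge must, whether color preserving or color reversing, carry every type $j$ edge to some type $i$ edge. Applying this to $\gamma^{-1}|_{A'}$ and the pair $(e_1, e_0)$ yields $\gamma^{-1}(e_2) \in B_i$, completing the argument.

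The main obstacle is the foundational step that $\gamma^{-1}$ genuinely sends apartments to apartments isomorphically and that edge types are preserved under this restriction. This is a standard fact in building theory, but it ultimately rests on $\text{Aut}(I_{p, \mathbf{q}})$ being defined as a group of cellular automorphisms that preserve the local link data $K_{q_i, q_{i+1}}$ at each vertex and hence permute apartments. Once this is granted, together with the uniqueness of the apartment through any two chambers noted in the preamble to the lemma, the reduction to Lemma \ref{lem_orbit in H} is essentially bookkeeping and parallels the apartment case verbatim.
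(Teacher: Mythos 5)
Your proposal is correct and follows essentially the same route as the paper's own proof: use the apartment axiom to place the two type $j$ edges in a common apartment, note that $\gamma^{-1}$ carries this apartment isomorphically to another apartment, and then invoke the apartment-level statement (Lemma \ref{lem_orbit in H} and its color preserving/reversing dichotomy) to conclude the preimage of the arbitrary type $j$ edge has type $i$.
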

\begin{proof}
    Let $\gamma \in \Tilde{\Gamma}$ be an element such that $\gamma(B_i)$ contains at least one type $j$ edge, and denote this edge by $e_0$. Let $e_1$ be an arbitrary edge contained in $B_j$. Since there exists an apartment containing any two chambers in $I_{p, \mathbf{q}}$, we can find an apartment $A$ that contains both $e_0$ and $e_1$. $\gamma^{-1}(A)$ is also an apartment. We can then restrict $\Tilde{\Gamma}$ to $\gamma^{-1}(A)$, which can be considered as a subgroup of $\Gamma\leq {\text{Isom}}^{+}(\mathbb{H})$, since $\Tilde{\Gamma}|_{\gamma^{-1}(A)}$ maps the apartment $\gamma^{-1}(A)$ to another apartment $A$, and both apartment are both homeomorphic to $\mathbb{H}$. By Lemma \ref{lem_orbit in H}, it follows that there exists an edge $e_2$ in $\gamma^{-1}(A)$ such that $\gamma(e_2)=e_1$.
\end{proof}

Lemma \ref{lem_odd length loop} and Lemma \ref{lem_compare thickness}, which will be stated below, play an important role in proving the necessary conditions for the non-existence of $\Gamma_{p, \mathbf{q}, g}$.

\begin{lemma}\label{lem_odd length loop}
    Let $\Tilde{\Gamma}$ be any subgroup of $\emph{Aut}(I_{p, \mathbf{q}})$. Suppose that $I_{p, \mathbf{q}} /\Tilde{\Gamma}$ is a compact hyperbolic surface containing a closed geodesic loop $l$ of odd length. Also assume that $\pi^{-1}(l)$ contains a type $i$ edge in $I_{p, \mathbf{q}}$. Then, $\Tilde{\Gamma} \cdot B_{i\pm 1}\supseteq B_{i\mp 1}$.
\end{lemma}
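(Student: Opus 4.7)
The plan is to lift the odd-length closed geodesic loop $l$ to a geodesic line $\Tilde{l}$ in $I_{p,\mathbf{q}}$, exhibit an element of $\Tilde{\Gamma}$ stabilizing $\Tilde{l}$ that realizes one period of $l$, and then argue via a parity count that this element must interchange edge types $i-1$ and $i+1$. The containments claimed in the lemma will then follow from Lemma \ref{lem_orbit in I_p,q}.

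I would first verify that every geodesic line in the $1$-skeleton of $I_{p,\mathbf{q}}$ has constant edge type. At a vertex of type $\{j,j+1\}$ the link is $K_{q_j,q_{j+1}}$, and the vertices at distance $2$ from any element in the size-$q_j$ part are precisely the remaining vertices of that same part. Since the right-angled geometry translates a straight crossing at a building vertex into a length-$\pi$ path in the link (each link edge having length $\pi/2$), a geodesic that enters along an edge of type $j$ must exit along another edge of type $j$. Thus I may choose the lift $\Tilde{l}$ so that all of its edges have type $i$, and then its vertices alternate between types $\{i-1,i\}$ and $\{i,i+1\}$ because the edges of a $p$-gon are cyclically labeled.

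Next, I would select $\gamma \in \Tilde{\Gamma}$ that stabilizes $\Tilde{l}$ and acts on it as a translation by exactly $n$ edges (such an element exists because the image $\pi(\Tilde{l})$ is the closed loop $l$ of length $n$). Let $\sigma$ denote the permutation of the type set $\{1,\ldots,p\}$ induced by $\gamma$. The identity $\gamma(\Tilde{l})=\Tilde{l}$ forces $\sigma(i)=i$, while the odd parity of $n$ forces $\gamma$ to send a vertex of type $\{i-1,i\}$ to one of type $\{i,i+1\}$, giving $\sigma\{i-1,i\}=\{i,i+1\}$. A color-preserving $\gamma$ would act on types by a cyclic shift, and the constraint $\sigma(i)=i$ would force that shift to be trivial, contradicting the vertex swap. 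Hence $\gamma$ is color-reversing, $\sigma$ is a reflection of $\mathbb{Z}/p\mathbb{Z}$ fixing $i$, and the unique such reflection is $\sigma(j)=2i-j$, which interchanges $i-1$ and $i+1$.

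Consequently $\gamma$ sends $B_{i+1}$ bijectively into $B_{i-1}$, so $\Tilde{\Gamma}\cdot B_{i+1}$ meets $B_{i-1}$, and Lemma \ref{lem_orbit in I_p,q} upgrades this to $\Tilde{\Gamma}\cdot B_{i+1}\supseteq B_{i-1}$; applying the same argument to $\gamma^{-1}$ yields $\Tilde{\Gamma}\cdot B_{i-1}\supseteq B_{i+1}$. The step I expect to require the most care is the first one, establishing that geodesics in the $1$-skeleton genuinely preserve their edge type at each building vertex (and that the $\gamma$ translating $\Tilde{l}$ by $n$ edges exists inside $\Tilde{\Gamma}$); the remainder is a short type-parity calculation followed by a direct appeal to Lemma \ref{lem_orbit in I_p,q}.
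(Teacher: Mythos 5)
Your argument is essentially the paper's own proof: the paper also lifts $l$ to a geodesic $l_0$ of type-$i$ edges inside an apartment, takes $\gamma\in\Tilde{\Gamma}$ identifying $e_0$ with an edge an odd number of steps along $l_0$, uses the alternation of vertex types $\{i-1,i\}$ and $\{i,i+1\}$ along $l_0$ to conclude that $\gamma$ swaps types $i-1$ and $i+1$, and then invokes Lemma \ref{lem_orbit in I_p,q}. Your explicit link computation showing that geodesics in the $1$-skeleton preserve edge type, and the rotation-versus-reflection dichotomy for the induced type permutation, are just more carefully spelled-out versions of steps the paper handles implicitly via the apartment and its figure.
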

\begin{proof}
    Here and subsequently, let $\pi$ denote the canonical quotient map $I_{p, \mathbf{q}}\rightarrow I_{p, \mathbf{q}}/\Tilde{\Gamma}$. Let $e_0$ be a type $i$ edge contained in $\pi^{-1}(l)$. Fix an apartment $A$ that contains $e_0$, and let $l_0$ be the geodesic in $A$ that contains $e_0$. (The choice of $A$ is arbitrary among the several apartments containing $e_0$, however, $l_0$ is uniquely determined once $A$ is fixed.) The image $\pi(l_0)$ is connected and contains the edge $\pi(e_0)$ in $l$. It is easy to check $\pi(l_0)$ is same with $l$, given that $e_0$ is the unique geodesic loop in $I_{p, \mathbf{q}}/\Tilde{\Gamma}$ that contains $\pi(e_0)$. Since the length of $l$ is odd, there exists an element $\gamma \in \Tilde{\Gamma}$ that maps $e_0$ to another edge $e_1$ on $l_0$, with an even number of edges between $e_0$ and $e_1$ on $l_0$ (See Figure \ref{fig: identification}). This $\gamma$ maps the edges connected to the endpoints of $e_0$ to those connected to the endpoints of $e_1$. Since $\gamma$ fixes the geodesic $l_0$ and is orientation preserving, it maps $e_0'$ to $e_1'$ and $e_0''$ to $e_1''$ as in Figure \ref{fig: identification}. From this observation, $\gamma$ maps type $i+1$ edges to type $i-1$ edges and type $i-1$ edges to type $i+1$ edges. It follows from Lemma \ref{lem_orbit in I_p,q} that $\Tilde{\Gamma}\cdot B_{i+1}\supseteq B_{i-1}$ and $\Tilde{\Gamma}\cdot B_{i-1}\supseteq B_{i+1}$.
    \begin{figure}
        \centering
        \includegraphics[width=0.7\linewidth]{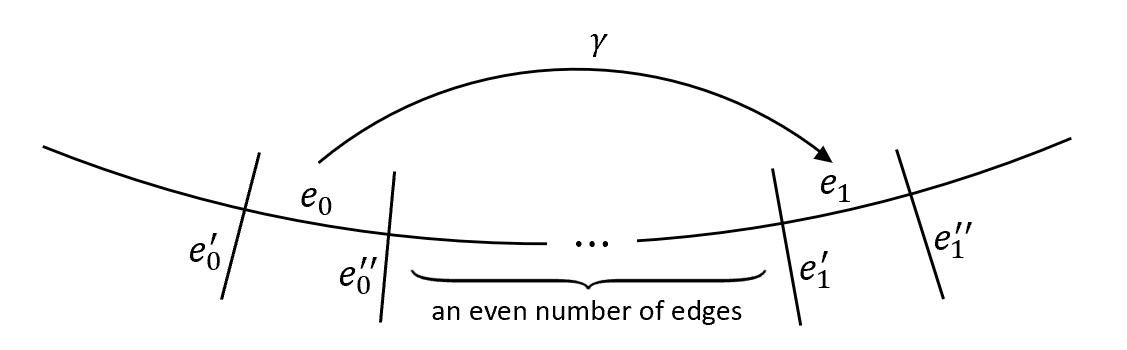}
        \caption{action by $\gamma \in \Tilde{\Gamma}$ maps $e_0$, $e_0'$, and $e_0''$ to $e_1$, $e_1'$, and $e_1''$, respectively.}
        \label{fig: identification}
    \end{figure}
\end{proof}

\begin{lemma}\label{lem_compare thickness}
    Let $\Tilde{\Gamma}$ be any subgroup of $\emph{Aut}(I_{p, \mathbf{q}})$. If $\Tilde{\Gamma}\cdot B_i \supseteq B_j$, then $q_i = q_j$ in the given sequence $\mathbf{q}$.
\end{lemma}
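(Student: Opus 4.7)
The plan is to invoke the principle that cellular automorphisms of $I_{p, \mathbf{q}}$ preserve local incidence data. In particular, for any edge $e$ of $I_{p, \mathbf{q}}$, the number of 2-cells (chambers) containing $e$ is an invariant preserved by every $\gamma \in \text{Aut}(I_{p, \mathbf{q}})$, because $\gamma$ sends chambers to chambers bijectively and preserves the face relation between an edge and the chambers bounded by it.

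The key combinatorial fact I will use is that for any type $i$ edge $e \in B_i$, the number of chambers of $I_{p, \mathbf{q}}$ containing $e$ is exactly $q_i$. This is the defining meaning of $\mathbf{q}$ as the tuple of thicknesses stated in the Introduction, and it can be read off directly from the link condition of Definition \ref{def_Ipq}: at an endpoint $\sigma$ of $e$ of type $\{i, i+1\}$, the chambers containing $\sigma$ correspond bijectively to the edges of $Lk(\sigma, I_{p, \mathbf{q}}) = K_{q_i, q_{i+1}}$, and the chambers containing both $\sigma$ and $e$ correspond to those link-edges incident to the link-vertex representing $e$. A parallel count at the other endpoint $\sigma'$ of type $\{i-1, i\}$ (with link $K_{q_{i-1}, q_i}$) must give the same number, which pins it down unambiguously as $q_i$.

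With these observations, the conclusion is immediate. By the hypothesis $\Tilde{\Gamma}\cdot B_i \supseteq B_j$, pick some $\gamma \in \Tilde{\Gamma}$ and some $e \in B_i$ with $\gamma(e) \in B_j$. Since $\gamma$ is a cellular automorphism, the number of chambers containing $e$ equals the number of chambers containing $\gamma(e)$. The former is $q_i$ and the latter is $q_j$, hence $q_i = q_j$.

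The argument is short and I do not anticipate any substantive obstacle. The only delicate step is the identification of the edge thickness with $q_i$, which amounts to a bookkeeping exercise about which side of the bipartite link $K_{q_i, q_{i+1}}$ corresponds to type $i$ edges — a choice forced by the requirement that the thickness be well-defined from either endpoint of the edge.
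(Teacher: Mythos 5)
Your proof is correct and matches the paper's in substance: both rest on the fact that the thickness of a type $i$ edge --- the number of chambers of $I_{p,\mathbf{q}}$ containing it --- equals $q_i$ and is an invariant of cellular automorphisms. The paper phrases this invariance via the uniqueness of the local development at the midpoint of the image edge in the quotient, whereas you argue directly upstairs by counting chambers through the link $K_{q_i,q_{i+1}}$; your version is, if anything, more self-contained, and your remark that the count must agree at both endpoints correctly resolves the only bookkeeping ambiguity.
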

\begin{proof}
    Let $e_0$ be a type $i$ edge such that $\Tilde{\Gamma}\cdot e_0$ contains at least one type $j$ edge. Because of the uniqueness of local development at the midpoint of the edge $\pi (e_0)$, the thickness of $e_0$ and $e_1$ in $I_{p, \mathbf{q}}$ must be the same, which implies $q_i=q_j$.
\end{proof}

Using these Lemmas, we can prove Proposition \ref{lem: nonexistence main}, the main result of this section.

\begin{prop}\label{lem: nonexistence main}
    For given $p$, $g$, and $\mathbf{q}$ which satisfying $F=\frac{8(g-1)}{p-4}$ is integer, assume that $\Gamma_{p, \mathbf{q}, g}$ exists. If $F$ is even, however, not divided by 4, $\mathbf{q}$ is a 2-symmetric sequence. If $F$ is odd, $\mathbf{q}$ is 4-symmetric sequence.
\end{prop}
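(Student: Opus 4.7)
The plan is to derive $\mathbf{q}$'s symmetry from the existence of odd-length closed geodesic loops in the quotient surface $X = I_{p,\mathbf{q}}/\Gamma$, using Lemmas 5.1, 5.2, and 5.3. First I would observe that every geodesic in $I_{p,\mathbf{q}}$ traverses opposite sides of each polygon, so its edges alternate between types $i$ and $i+p/2$. If a closed geodesic loop in $X$ has length $n$, then the minimal generator $\tilde\gamma \in \tilde\Gamma$ of the stabilizer of its lift $l_0$ translates along $l_0$ by $n$ edges; in particular, if $n$ is odd then $\tilde\gamma$ sends a type-$i$ edge to a type-$(i+p/2)$ edge, yielding $q_i = q_{i+p/2}$ by Lemmas 5.1 and 5.3.

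Next I would show that for the stated parities of $F$, the quotient $X$ necessarily contains an odd-length geodesic loop. For $F$ odd, the dual graph of $X$ is $p$-regular with $F$ vertices, so it cannot be bipartite (a bipartition would force equal parts of size $F/2$, not an integer); by the correspondence developed in Section 3 (in particular the argument of Lemma 4.4), a non-bipartite dual graph implies an odd-length geodesic loop. For $F \equiv 2 \pmod 4$ a more delicate argument is needed, using that the number of type-$i$ edges in $X$ must be $F/2$ (odd) and tracking how type-$\{i,i+p/2\}$ geodesic loops partition this count modulo $4$; this is the principal technical obstacle.

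With an odd loop in hand containing a type-$i$ edge in its preimage, Lemma 5.2 exhibits $\gamma \in \tilde\Gamma$ identifying types $i \pm 1$ and $i \mp 1$. By extending the geometric argument at the heart of its proof to the lateral edges at every vertex along $l_0$, $\gamma$ in fact implements the full reflection $j \mapsto 2i - j$ on the set of types when $p \equiv 0 \pmod 4$, and a half-integer-axis reflection when $p \equiv 2 \pmod 4$. In the latter case, composing this reflection with the period-$p/2$ identity $q_j = q_{j+p/2}$ from the first paragraph restores an integer-axis reflection, so in all cases $\mathbf{q}$ is 2-symmetric about some integer $m$ by Lemma 5.3.

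This completes the argument when $F \equiv 2 \pmod 4$. For $F$ odd, combining the 2-symmetry $q_{m+k} = q_{m-k}$ with the period-$p/2$ identity $q_j = q_{j+p/2}$ yields $q_{m+k} = q_{m-k} = q_{m+p/2-k} = q_{m+p/2+k}$ for all $k$, precisely the definition of 4-symmetry about $m$. The main obstacles will be (a) establishing the existence of an odd geodesic loop when $F \equiv 2 \pmod 4$ via a careful counting argument on loop lengths modulo $4$, and (b) verifying in detail that Lemma 5.2's element $\gamma$ realizes the full dihedral reflection on types rather than only swapping $i \pm 1$, which requires tracking the induced map on lateral edges at every vertex of $l_0$.
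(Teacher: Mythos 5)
Your overall strategy --- produce an odd-length boundary geodesic loop in the quotient and convert the resulting element of $\Tilde{\Gamma}$ into a symmetry of $\mathbf{q}$ --- is the same as the paper's, but your argument rests on a false structural claim in the very first step. A boundary geodesic in $I_{p,\mathbf{q}}$ does \emph{not} alternate between edge types $i$ and $i+p/2$: at a vertex of type $\{i,i+1\}$ the link is $K_{q_i,q_{i+1}}$, so every incident edge has type $i$ or $i+1$, and the straight continuation of a type-$i$ edge through such a vertex must lie at link-distance at least $\pi$, hence is again a type-$i$ edge. Every wall is therefore mono-typed, and the element translating an odd-length loop by its length sends type-$i$ edges to type-$i$ edges; what it swaps are the two \emph{vertex} types $\{i-1,i\}$ and $\{i,i+1\}$ along the wall, and hence the lateral edge types $i-1$ and $i+1$ --- this is exactly what Lemma \ref{lem_odd length loop} extracts, and it is the only information a single wall gives. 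Your claimed identity $q_i=q_{i+p/2}$ is thus unproved; worse, when $F\equiv 2\pmod 4$ it is strictly stronger than the $2$-symmetry the proposition asserts (a $2$-symmetric sequence such as $(a,b,c,d,c,b,a,e)$ for $p=8$ need not have period $p/2$), so any argument producing it in that case must be wrong. Since both your repair of the ``half-integer-axis reflection'' and your entire upgrade from $2$-symmetry to $4$-symmetry when $F$ is odd are powered by this identity, those steps collapse with it. The paper instead obtains $4$-symmetry by collecting \emph{all} reflection axes $i_1,\dots,i_n$ furnished by Lemma \ref{lem_odd length loop}, invoking the classification of plane reflection groups to see they are equally spaced, and ruling out an odd number of axes by a parity count of the edges in the orbit $\Gamma_{p,\mathbf{q},g}\cdot B_d$; some such global argument is unavoidable.

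Two further gaps. First, you explicitly leave open the existence of an odd-length loop when $F\equiv 2\pmod 4$, calling it ``the principal technical obstacle''; the paper settles it by showing that if all loops were even then edge types would descend to the quotient, each type would comprise exactly $F/2$ edges partitioned into mono-typed geodesic loops, and $F/2$ odd forces one such loop to be odd (no ``mod $4$'' bookkeeping of mixed-type loops is needed, because mixed-type loops do not exist). Your bipartiteness argument for $F$ odd is fine. Second, you treat only the case in which the identifying element acts on types as a reflection; it may instead be color-preserving and shift every type by $2$, yielding $q_1=q_3=\cdots=q_{p-1}$ and $q_2=q_4=\cdots=q_p$, a case the paper handles separately and which you must also address before concluding $2$- or $4$-symmetry.
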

\begin{proof}
    Let $S_g$ be a hyperbolic genus $g$ surface  obtained as the quotient of $I_{p, \mathbf{q}}$ by $\Gamma_{p, \mathbf{q}}$. Let $T(S_g)$ denote the polygonal complex resulting from the $p$-gon tessellation induced by $I_{p, \mathbf{q}}$. Define $\Sigma_g$ as the complexes of groups, where a stabilizer subgroup of $\Gamma_{p, \mathbf{q}, g}$ is assigned to each vertex, edge, and face of $T(S_g)$.
    \\

    We first prove the case where $F$ is even, however, not divisible by 4. Assume that every boundary geodesic loop in $T(S_g)$ has an even length. By Proposition \ref{prop_mainthm}, a lattice $\Gamma_{p, \mathbf{q}, g}$ exists for every alternating non-coprime length $p$ sequence $\mathbf{q}$. Let $\mathbf{q}={2, 4, 6, \cdots, 2p}$, which satisfies the conditions for an alternating non-coprime sequence. Since all $q_i$ are distinct in this $\mathbf{q}$, Lemma \ref{lem_compare thickness} implies that $\Gamma_{p, \mathbf{q}, g}\cdot B_i$ cannot contain any $B_j$ for any $j\neq i$. However, if $\Gamma_{p, \mathbf{q}, g}\cdot B_i$ contains a type $j$ edge for $j\neq i$, $\Gamma_{p, \mathbf{q}, g}\cdot B_i$ must contain entire $B_j$ by Lemma \ref{lem_orbit in I_p,q}. This leads to $\Gamma_{p, \mathbf{q}, g}\cdot B_i =B_i$ for every $i$.
    
    Now, consider the canonical covering map $\pi: I_{p, \mathbf{q}}\rightarrow T(S_g)$. (This notation will be used throughout the proof.) For any boundary geodesic loop $l$ in $T(S_g)$, define the \textit{type} of $l$ as the type of the edges contained in $\pi^{-1}(l)$ in $I_{p, \mathbf{q}}$, as previously labeled. Since $\Gamma_{p, \mathbf{q}, g}\cdot B_i =B_i$, the type is well-defined for each $l$. Furthermore, it is possible to label every edge $e$ in $T(S_g)$ similarly, by assigning the type of $\pi^{-1}(e)$. 
    
    It is clear that each $p$-gon contained in $T(S_g)$ has exactly one type $i$ edge on its boundary for every $i=1, 2, \cdots, p$ by well-definedness of type. Consequently, each type has the same number of edges, meaning that there are $\frac{F}{2}$ numbers of type $i$ edges in $T(S_g)$. However, since $\frac{F}{2}$ is odd, there must exist a type $i$ boundary geodesic loop with odd length. This contradicts our assumption, and thus it suffices to prove the case where there exists a boundary geodesic loop with odd length.

    Under the above condition, it follows from Lemma \ref{lem_odd length loop} that there exists an integer $i$ such that $\Gamma_{p, \mathbf{q}, g}\cdot B_{i-1}\supseteq B_{i+1}$. Consequently, we obtain $q_{i-1}=q_{i+1}$ from Lemma \ref{lem_compare thickness}. Let $\gamma$ be an element of $\Gamma_{p, \mathbf{q}, g}$ that maps some $i-1$ type edge to $i+1$ type edge. What is left to show is to determine whether $\gamma$ is an orientation preserving or orientation reversing, $\mathbf{q}$ is 2-symmetry. If $\gamma$ is an orientation preserving, the argument at the beginning of Section \ref{sec5} shows that $\gamma$ maps a type $k$ edge to a type $k+2$ edge. Notice that since $F=\frac{8(g-1)}{p-4}$ is not divisible by 4, $p$ must be even. Thus, we deduce $q_1 = q_3 =\cdots = q_{p-1}$ and $q_2 = q_4 = \cdots = q_p$. Clearly, a sequence $\mathbf{q}$ satisfying these relations has 2-symmetry.

    We now turn to the case where $\gamma$ is orientation reversing. From the same argument used in the previous case, we conclude that $\gamma$ identifies the type $i+k$ edge with the type $i-k$ edge for every integer $k$. Following a similar approach to the former case, we get $q_{i-k}=q_{i+k}$. This implies that $\mathbf{q}$ is 2-symmetric by $i$, thereby completing the proof for the case where $F$ is even, however, not divisible by 4.
    \\

    Now, let us prove the case where $F$ is odd. Suppose that every boundary geodesic loop in $T(S_g)$ has an even length. By a similar argument to the case where $F$ is even, however, not divisible by 4, the contradiction is followed by the fact that $\frac{F}{2}$ is not an integer. Hence, there exists an boundary geodesic loop $l$ that has an odd length. Similarly to that in the proof of the former case, Lemma \ref{lem_odd length loop} and Lemma \ref{lem_compare thickness} imply that there is some $\gamma \in \Gamma_{p, \mathbf{q}, g}$ such that sends type $i-1$ edge to type $i+1$ edge for some integer $i$. If such a $\gamma$ is orientation preserving, we have already shown that $q_1=q_3=\cdots =q_{p-1}$ and $q_2=q_4=\cdots =q_p$. It is straightforward to verify that $p$ is divisible by 4 when $F$ is odd and that $\mathbf{q}$ is 4-symmetric. 
    
    Now, we only need to consider the case where every $\gamma \in \Gamma_{p, \mathbf{q}, g}$, which sends a type $i-1$ edge to a type $i+1$ edge for some $i$, is orientation reversing. Among the integers from 1 to $p$, let $i_1, i_2, \cdots, i_n$ denote all the integers for which there exists an element $\gamma$ that satisfies this condition. These values $i_1, i_2, \cdots, i_n$ can be represented as symmetry axes of the $p$-gon . Figure \ref{fig_symmetry axis} illustrates examples for $p=8$ and $p=12$. The thickness of two edges that are symmetrical about the symmetric axis is the same.
    \begin{figure}
        \centering
        \includegraphics[width=0.75\linewidth]{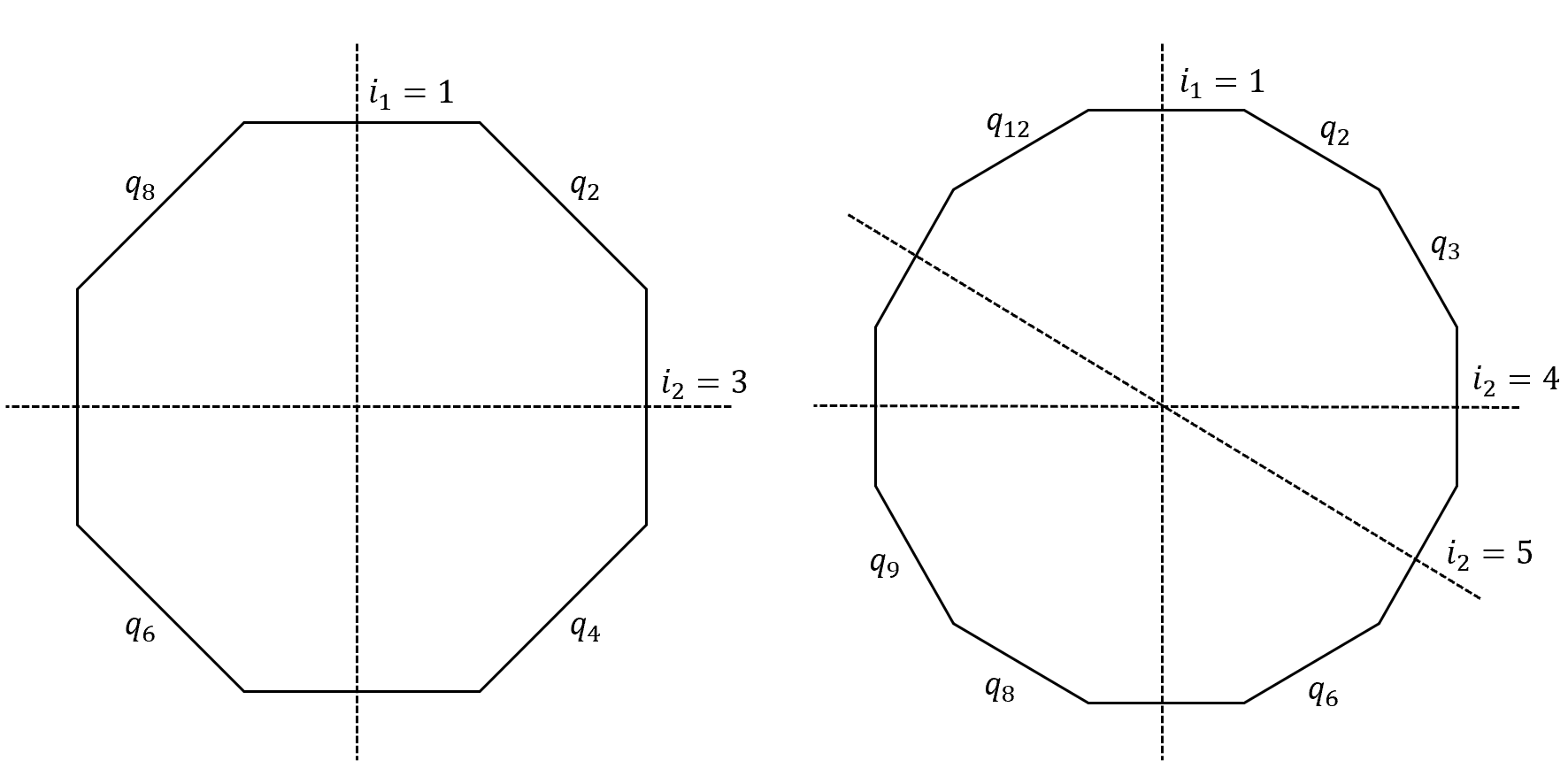}
        \caption{The left-hand side shows the example where $p=8$ and $i_1=1, i_2=3$. In this case, $q_1=q_5$, $q_3=q_7$, and $q_2=q_4=q_6=q_8$, meaning that $\mathbf{q}$ is 4-symmetric by 1. The right-hand side shows the example where $p=12$ and $i_1=1, i_2=4, i_3=5$. $q_1=q_3=\cdots=q_{11}$ and $q_2=q_4=\cdots=q_{12}$ follows from $q_{i_1-k}=q_{i_1+k}$, $q_{i_2-k}=q_{i_2+k}$, and $q_{i_3-k}=q_{i_3+k}$, as is easy to check. Hence, this case should exhibit two more symmetry axes: $i_4=2$ and $i_4=3$.}
        \label{fig_symmetry axis}
    \end{figure}

    According to \cite{martin1982classification}, if several reflection symmetries pass through a certain point on the plane, they must align with the symmetric axes of a regular polygon. Suppose that there are $n$ symmetric axes such that the angles between two adjacent axes are all the same. Remark that for these $n$ axes to fit within the $p$-gon, $p$ must be divisible by $n$. Now, assume that $n$ is odd. Denote $\frac{p}{n}$, which must be an integer by the previous remark, as $d$. Without loss of generality, let $i_1=d, i_2=2d, \cdots, i_n=nd=p$. An straightforward computation shows that $q_d=q_{2d}=\cdots=q_p =q_{d+\frac{p}{2}}=q_{2d+\frac{p}{2}}=\cdots =q_{\frac{p}{2}}$, and $\Gamma_{p, \mathbf{q}, g}\cdot B_d=B_d \cup B_{2d}\cup\cdots\cup B_{p}\cup B_{d+\frac{p}{2}}\cup B_{2d+\frac{p}{2}}\cup\cdots\cup B_{\frac{p}{2}}$. 
    
    The sum of the lengths of the boundary geodesic loop formed by the edges in $\pi(\Gamma_{p, \mathbf{q}, g}\cdot B_d)$ is $nF$. However, since $n$ and $F$ are odd, this contradicts the assumption that every boundary geodesic loop has an even length. Therefore, it remains to consider the case where $n$ is even. From the fact that there are an even number of symmetric axes with equal angles between adjacent axes, it follows directly that $\mathbf{q}$ has 4-symmetry, leading to the desired conclusion.

\end{proof}

\section*{Acknowledgement}
I would like to thank Prof. Seonhee Lim for many stimulating conversations and several helpful comments from her broad insights in hyperbolic geometry. This work was partially supported by the undergraduate research internship program at the College of Natural Sciences, Seoul National University.

\bibliographystyle{amsalpha}
\bibliography{surface_quotients_of_right-angled_hyperbolic_buildings}
\end{document}